\documentclass[english,letterpaper,11pt,reqno]{amsart}

\setlength{\parindent}{0pt} 
\usepackage{appendix}
\usepackage{amsbsy}
\usepackage{amsfonts}
\usepackage{amsmath}
\usepackage{amssymb}
\usepackage{amsthm}
\usepackage{graphicx}
\usepackage{ifthen}
\usepackage{textcomp}
\usepackage{soul}
\usepackage{enumitem,kantlipsum}
\usepackage{tikz}
\usepackage{mathrsfs}
\usepackage{cancel}
\usepackage{lmodern}
\usepackage{dsfont}
\usepackage[bookmarksnumbered,colorlinks]{hyperref}
\hypersetup{colorlinks=true, linkcolor=blue, citecolor=blue}
\emergencystretch20pt 

\newcommand{\lra}{\longrightarrow}

\newcommand{\RR}{\mathbb{R}}

\newcommand{\uu}{\mathscr{U}}
\newcommand{\cc}{\mathcal{C}}
\newcommand{\Lev}{\mathcal{S}}
\newcommand{\sff}{\mathrm{II}}

\newtheorem{thm}{Theorem}

\newtheorem{lemma}{Lemma}

\newcommand{\beqa}{\begin{eqnarray}}
\newcommand{\beq}{\begin{equation}}
\newcommand{\eeqa}{\end{eqnarray}}
\newcommand{\eeq}{\end{equation}}


\newcommand\imp{\hspace{.2in}\Rightarrow\hspace{.2in}}

\newcommand\cd[2]{\nabla_{\!#1}{#2}}
\newcommand\conn[2]{\overline{\nabla}_{\!#1}{[#2]}}

\newcommand\nf{\nabla\!f}

\newcommand\gnf{g_{\scalebox{0.5}{\emph{$\nf$}}}}
\newcommand\gN{g_{\scalebox{0.5}{\emph{N}}}}
\newcommand\hc{h_{\scalebox{.5}{\emph{b,c}}}}
\newcommand\gpw{h_{\scriptscriptstyle \Omega}}

\newcommand\comma{\hspace{.2in},\hspace{.2in}}

\newcommand*{\defeq}{\mathrel{\vcenter{\baselineskip0.5ex \lineskiplimit0pt
                     \hbox{\scriptsize.}\hbox{\scriptsize.}}}
                     =}

\usepackage{enumitem, color, amssymb}

\begin{document}
\title[]{Finsler pp-Waves and the Penrose Limit}
\author{Amir Babak Aazami, Miguel \'{A}ngel Javaloyes, Marcus C. Werner}
\address{Amir Babak Aazami\hfill\break\indent Department of Mathematics \hfill\break\indent Clark University\hfill\break\indent Worcester, MA 01610, USA}
\email{Aaazami@clarku.edu}
\address{Miguel \'{A}ngel Javaloyes \hfill\break\indent Department of Mathematics\hfill\break\indent  University of Murcia \hfill\break\indent 30100 Murcia, Spain}
\email{majava@um.es}
\address{Marcus C. Werner \hfill\break\indent Zu Chongzhi Center for Mathematics and Computational Sciences \hfill\break\indent Duke Kunshan University \hfill\break\indent Kunshan, Jiangsu 215316, China}
\email{marcus.werner@dukekunshan.edu.cn}
\maketitle

\begin{abstract}
The Penrose plane wave limit is a remarkable property of Lorentzian spacetimes. Here, we discuss its extension to Finsler spacetimes by introducing suitable lightlike coordinates and adapting the Lorentzian definition of pp-waves. New examples of such Finsler pp-waves are also presented.
\end{abstract}

\section{Introduction}
Among R. Penrose's many accomplishments in general relativity are two foundational results on \emph{plane waves}, a distinguished class of spacetimes  modeling radiation propagating at the speed of light.  These metrics have a long and rich history within the field of general relativity; they sit inside the more general family of \emph{pp-wave} spacetimes introduced by J. Ehlers and W. Kundt \cite{ehlerskundt}, which themselves comprise an important subclass of the family of so called \emph{Brinkmann spacetimes} due to H. W. Brinkmann \cite{brinkmann}, which are spacetimes containing a parallel (covariantly constant) lightlike vector field.  A very comprehensive recent survey of plane waves can be found in \cite{AMS}; pp-waves are now also studied purely in a mathematical context (see, e.g., \cite{flores,globke,leistner,FS-Ehlers}), in addition to their continued usage in gravitational physics (see, e.g., \cite{blau,blau2}).  Of their many properties both physical and mathematical, two of the most noteworthy were discovered by Penrose himself.  The first of these, \cite{remarkable}, is that plane waves are never globally hyperbolic: this is ``remarkable", to borrow Penrose's own description, all the more so since plane waves are known to be geodesically complete.  The second result, \cite{penPW}, no less remarkable, is that every spacetime has a plane wave in a certain well defined limit, a local construction now known as Penrose's ``plane wave limit," perhaps the most important realization of a more general notion of ``spacetime limit" due to R. Geroch \cite{gerochL}.  
\newline
\indent Given the distinguished position that plane waves occupy\,---\,and especially in light of Penrose's result that every Lorentzian metric admits one as a limit\,---\,it is worthwhile to ask whether there exist analogues of them in \emph{other} geometries of indefinite signature, and, if so, whether Penrose's plane wave limit carries over to such settings as well. A natural direction in which to take this question is that of Finsler geometry of Lorentzian signature, the setting of so called \emph{Finsler spacetimes}, especially in light of their many recent physical applications; see, e.g., \cite{BJS20,HPV20,JS20,Kos1,Kos2,PfeiWol,Perlick} and the references therein.  Indeed, there are already examples of ``Finsler pp-waves" in the literature (see, e.g., \cite{fusterpabst}).  In pursuing our question, we are less motivated by the connection between pp-waves, gravitational radiation, and Einstein's equation\,---\,indeed, there is no agreed upon analogue of the latter in Finsler geometry\,---\,and more by the fact that, via Penrose's limit, plane waves play a central role in Lorentzian geometry per se.  What role may they play in Finsler geometry?
\newline
\indent
In this paper we attempt an answer to this question by first introducing a general definition of Finsler pp-wave, a definition that subsumes the isolated examples of Finsler pp-waves already in the literature; indeed, in Section \ref{sec:Examples} below we introduce additional examples of Finsler pp-waves in accord with our definition.  Second, we show that there does exist a notion of ``plane wave limit" in the Finslerian setting.  These two facts, we argue, make them worthy of study as Finslerian objects.  Throughout our paper, at every step of our construction, we carefully present the modifications required to pass from Lorentzian to Finslerian geometry: this includes presenting the Finsler analogue of lightlike coordinates (see, e.g., \cite{pen}), the Finsler analogue of the invariant definition of pp-wave in terms of the Riemann curvature tensor (see, e.g., \cite{globke}), and, finally, the Finsler analogue of the construction of Penrose's plane wave limit itself.

\section{Preliminaries}
\label{sec:Preliminaries}
Let $M$ be a (connected) manifold, $TM$ its tangent bundle, and $\pi\colon TM\rightarrow M$ the natural projection. Let us consider a  connected open subset $A\subset TM\setminus \bf 0$ which is conic, namely, satisfying the property that $\lambda v\in A$ for all $v\in A$ and $\lambda>0$. Further assume that $A$ has smooth boundary, that each $A_p=A\cap T_pM$ is nonempty for all $p\in M$, and denote  $\bar A$ its closure in $TM\setminus \mathbf{0}$. Given a function $L\colon \bar A \subset TM \rightarrow [0,\infty)$, we will say that   $(M,L)$ is a {\em Finsler spacetime} if $L$ is a
smooth function which is positive homogeneous of degree $2$ when restricted to each $\bar A_p \defeq T_pM\cap \bar A$, its fundamental tensor
\begin{equation}\label{fundten}
g_{v}(u,w)=\frac{1}{2}\left.\frac{\partial^2}{\partial s\partial t} L(v+tu+sw)\right|_{t=s=0}
\end{equation}
for $v\in \bar A$ and $u,w\in T_{\pi(v)}M$   has signature $(+-\dots-)$ and  the  boundary of $\bar A$  in $TM\setminus \{\mathbf{0}\}$ coincides with $\cc\defeq L^{-1}(0)$, which is called the lightcone of $L$. Observe that in this case, $\bar A_p$ is convex and salient for every $p\in M$ and the indicatrix $\Sigma=L^{-1}(1)$ is a strongly convex hypersurface when restricted to each tangent space, namely, each $\Sigma_p=\Sigma\cap T_pM$ is strongly convex.

Observe that there are quite a few subtleties in the definition of a Finsler spacetime, but we will adopt the one firstly considered in \cite{JS14}. Amongst the subtetlies to bear in mind we can point out that
\begin{itemize}[leftmargin=*]
\item in some definitions, beginning with that of Beem \cite{Beem71}, $L$ is defined in the whole $TM$. Observe that as explained in \cite{BJS20}, for an observer $v\in \Sigma$ one can consider $g_v$ as a positive definite metric in its restspace, without any need of considering $L$ defined away from the causal cone.
\item In others, some non-smooth directions are allowed \cite{Perlick,AJ16,CaStan,Minguzzi} or they must be smooth up to some power \cite{PfeiWol} (relevant in the lightlike directions),
\item there are others that do not consider the lightlike directions as a part of the model \cite{Asanov},
\item there are some models which appear in other contexts with  slightly different properties \cite{Kos1,Kos2}.
\end{itemize}
Associated with the Lorentz-Finsler metric $L$, there is another anisotropic tensor, usually called the Cartan tensor, defined as 
\begin{equation}\label{Cartan}
C_{v}(u,w,z)=\frac{1}{4}\left.\frac{\partial^3}{\partial r\partial s\partial t} L(v+tu+sw+rz)\right|_{t=s=r=0},
\end{equation}
for any $v\in \bar A$ and $u,w,z\in T_{\pi(v)}M$.
This symmetric anisotropic tensor is what makes different Finsler spacetimes from the classical Lorentzian geometry. It is straightforward to check that 
\begin{equation}\label{homoCartan}
C_{v}(v,u,w)=C_{v}(u,v,w)=C_{v}(u,w,v)=0,
\end{equation}
for any $v\in \bar A$ and $u,w\in T_{\pi(v)}M$. Moreover, the Levi-Civita--Chern anisotropic connection is a very useful tool for the study of Finsler spacetimes (see \cite{Jav19,Jav20,JSV22} for more details about anisotropic connections and calculus). Recall that an anistropic connection can be thought as a connection which depends on directions. This means that for every $v\in \bar A$ and $X,Y\mathfrak{X}(M)$, one obtains a different value $\nabla^{v}_XY\in T_{\pi(v)}M$. In particular, given a chart $(\uu,\varphi)$, the Christoffel symbols are functions $\Gamma^k_{ij}\colon \bar A\cap T\uu\rightarrow \RR$ which are homogeneous of degree zero and $\nabla^{v}_{\partial_i}\partial_j=\Gamma^k_{ij}(v)\partial_k$, where $\partial_1,\ldots,\partial_n$ are the partial vector fields of the chart. If we fix a vector field $V\in{\mathfrak{X}}(\uu)$ which is \emph{$\bar A$-admissible}, that is to say, taking values in $\bar A$, we obtain then an affine connection $\nabla^{\scalebox{0.5}{\emph{V}}}$ in $\uu$ with Christoffel symbols $\Gamma^i_{jk}\circ V$. The Levi-Civita--Chern connection can be characterized in terms of the associated affine connections $\nabla^{\scalebox{0.5}{\emph{V}}}$. Indeed, it is the only one such for which
\begin{enumerate}[leftmargin=*]
 \item[1.] $\nabla^{\scalebox{0.5}{\emph{V}}}$ is {\it torsion-free}, namely,
$\nabla^{\scalebox{0.5}{\emph{V}}}_XY-\nabla^{\scalebox{0.5}{\emph{V}}}_YX=[X,Y]$
for all $X, Y\in{\mathfrak X}(\uu)$,
\item[2.] $\nabla^{\scalebox{0.5}{\emph{V}}}$ is {\it almost $g$-compatible}, namely
\[X( g_{\scalebox{0.5}{\emph{V}}}(Y,Z))=g_{\scalebox{0.5}{\emph{V}}}(\nabla^{\scalebox{0.5}{\emph{V}}}_XY,Z)+g_{\scalebox{0.5}{\emph{V}}}(Y,\nabla^{\scalebox{0.5}{\emph{V}}}_XZ)+2 C_{\scalebox{0.5}{\emph{V}}}(\nabla^{\scalebox{0.5}{\emph{V}}}_XV,Y,Z),\]
where $X,Y,Z\in{\mathfrak X}(\uu)$ and $g_{\scalebox{0.5}{\emph{V}}}$ and $C_{\scalebox{0.5}{\emph{V}}}$ are the classical tensors obtained when \eqref{fundten} and \eqref{Cartan} are evaluated in the vector field $V$.
\end{enumerate}
Observe that almost $g$-compatibility is equivalent to $\nabla g=0$ when this tensor derivative is computed using the anisotropic calculus developed in 
\cite{Jav19,Jav20}. Moreover, there is also a Koszul formula that determines $\nabla^{\scalebox{0.5}{\emph{V}}}$:
\begin{multline}
 2 g_{\scalebox{0.5}{\emph{V}}}(\nabla^{\scalebox{0.5}{\emph{V}}}_XY,Z)= X (g_{\scalebox{0.5}{\emph{V}}}(Y,Z))-Z (g_{\scalebox{0.5}{\emph{V}}}(X,Y))+Y (g_{\scalebox{0.5}{\emph{V}}}(Z,X)\\
+g_{\scalebox{0.5}{\emph{V}}}([X,Y],Z)+g_{\scalebox{0.5}{\emph{V}}}([Z,X],Y)-g_{\scalebox{0.5}{\emph{V}}}([Y,Z],X)\\
2\big(\!-C_{\scalebox{0.5}{\emph{V}}}(\nabla^{\scalebox{0.5}{\emph{V}}}_XV,Y,Z)-C_{\scalebox{0.5}{\emph{V}}}(\nabla^{\scalebox{0.5}{\emph{V}}}_YV,Z,X)+C_{\scalebox{0.5}{\emph{V}}}(\nabla^{\scalebox{0.5}{\emph{V}}}_ZV,X,Y)\big).\label{koszul}
\end{multline}
Observe that when $V$ is parallel, namely, $\nabla^{\scalebox{0.5}{\emph{V}}}_XV=0$ for all $X\in \mathfrak{X}(\uu)$, then the above Koszul formula coincides with the Koszul formula for $g_{\scalebox{0.5}{\emph{V}}}$. This means that when $V$ is parallel, the Levi-Civita--Chern connection $\nabla^{\scalebox{0.5}{\emph{V}}}$ of $L$ coincides with the Levi-Civita connection of $g_{\scalebox{0.5}{\emph{V}}}$. Even if it is not always possible to choose a parallel extension of any vector $v\in \bar A$, one can find a pointwise parallel vector field, namely, if $p=\pi(v)$, there exists an extension  $V\in \mathfrak{X}(\uu)$ for a certain neighborhood $\uu\subset M$ of $p$ such that $(\nabla^{\scalebox{0.5}{\emph{V}}}_XV)_p=0$ for all $X\in \mathfrak{X}(\uu)$ (see \cite[Prop. 2.13]{Jav20}). Indeed, with these extensions we can compute the Chern curvature tensor of $L$ very easily, If $R_v(X,Y)Z$ is the value of the Chern curvature tensor (which is an anisotropic tensor) for $v\in \bar A$ and $X,Y,Z\in \mathfrak{X}(M)$, let us choose a pointwise parallel extension $V$ of $v$, then
\[R_v(X,Y)Z=R^{\scalebox{0.5}{\emph{V}}}_p(X,Y)Z,\]
where $R^{\scalebox{0.5}{\emph{V}}}$ is the curvature tensor of $\nabla^{\scalebox{0.5}{\emph{V}}}$. This is because in the general expression of the Chern curvature tensor computed using $\nabla^{\scalebox{0.5}{\emph{V}}}$, apart from $R^{\scalebox{0.5}{\emph{V}}}$ there are some additional tensorial terms evaluated in $(\nabla^{\scalebox{0.5}{\emph{V}}}_XV)_p$ (see \cite[Prop. 2.5]{Jav20}). In particular, when $V$ is parallel, the curvature tensor of the Levi-Civita connection of $g_{\scalebox{0.5}{\emph{V}}}$ coincides with the Chern curvature tensor $R_{\scalebox{0.5}{\emph{V}}}$ evaluated at $V$.

A function $f\colon M\rightarrow \RR$ admits a gradient with respect to $L$, denoted $\nabla\!f$, if there exists a vector field metrically equivalent to $df$, namely, which satisfies
\[\gnf(\nf,X)=df(X)\]
for all $X\in{\mathfrak X}(M)$.
 Moreover, in this case the Hessian of $f$ is defined as  the anisotropic tensor $H^f(X,Y)=\cd{X}{d f}(Y)$, where $\nabla$ is the Levi-Civita--Chern connection of $L$. Observe that there is a dependence on $v\in A$ in the sense that
\[H^f_v(X,Y)=(\cd{X}{df})_v(Y)= X(df(Y))-df(\nabla^{\scalebox{0.5}{\emph{V}}}_XY).\]
Let us observe that $H^f$ is symmetric as 
\[H^f_v(X,Y)= X(df(Y))-df(\nabla^{\scalebox{0.5}{\emph{V}}}_XY)=X(Y(f))(\pi(v))-\nabla^{\scalebox{0.5}{\emph{V}}}_XY (f),\]
and $\nabla^{\scalebox{0.5}{\emph{V}}}_XY-\nabla^{\scalebox{0.5}{\emph{V}}}_YX=[X,Y]$ (the Levi-Civita--Chern connection is torsion-free), and
\begin{equation}
H^f_{{\scalebox{0.5}{\emph{$\nf$}}}}(X,Y)= \gnf(\nabla^{\nf}_{\!X}(\nf),Y).
\end{equation}
This follows from the above formula, because using the almost-compatibility of $\nabla $ with $g$,
\begin{align*}
\gnf(\nabla^{\nf}_X (\nf),Y)=&X(\gnf(\nf,Y))-\gnf(\nf, \nabla^{\nf}_XY)\\
=&X(Y(f))-\nabla^{\nf}_XY (f)=H^f_{\nf}(X,Y),
\end{align*}
(the Cartan term vanishes because homogeneity \eqref{homoCartan}).
\begin{lemma}
 A smooth function $f\colon M\rightarrow \RR$ admits a gradient with respect to a Finsler spacetime $(M,L)$ if and only if  $df|_A>0$. Moreover, in this case the gradient is unique.
 \end{lemma}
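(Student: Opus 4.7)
The plan is to construct the gradient pointwise via the fiberwise Legendre transform. For each $p\in M$ define
\[
\mathcal{L}_p : \bar A_p \longrightarrow T^*_pM, \qquad \mathcal{L}_p(v) = g_v(v,\cdot).
\]
I will show that $\mathcal{L}_p$ restricts to a smooth diffeomorphism from $A_p$ onto the open ``dual cone'' $A^*_p \defeq \{\omega \in T^*_pM : \omega(u) > 0 \text{ for every } u \in A_p\}$, that these fit together into a smooth bundle diffeomorphism, and then observe that the hypothesis $df|_A > 0$ is precisely the condition that $df_p \in A^*_p$ at every point, whence $(\nf)_p := \mathcal{L}_p^{-1}(df_p)$ is the unique gradient.

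For the necessity direction, suppose a gradient $\nf$ exists. Then $(\nf)_p$ lies in $A_p$ at each $p$, and for any $v \in A_p$ the defining relation gives $df(v) = g_{(\nf)_p}((\nf)_p, v)$. Since $(\nf)_p$ and $v$ both lie in the convex future cone $A_p$\,---\,which, under the standing axioms, is the future-timelike cone of $g_{(\nf)_p}$\,---\,the reverse Cauchy--Schwarz inequality for Lorentz--Finsler metrics yields $df(v) > 0$, so $df|_A > 0$.

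For the sufficiency direction, I first verify that $\mathcal{L}_p$ is a local diffeomorphism on $A_p$ by computing its differential. In a chart $(\mathcal{L}_p(v))_j = g_{ij}(v)\, v^i$, hence
\[
\partial_{v^k}\bigl(g_{ij}(v)\, v^i\bigr) = 2\, C_{ijk}(v)\, v^i + g_{jk}(v) = g_{jk}(v),
\]
the Cartan term vanishing by the homogeneity \eqref{homoCartan}. Since $g_v$ is nondegenerate (of Lorentzian signature), this differential is invertible and the inverse function theorem applies. Global injectivity of $\mathcal{L}_p$ on $A_p$ reduces, via the degree-one homogeneity $\mathcal{L}_p(\lambda v) = \lambda \mathcal{L}_p(v)$, to injectivity of the induced Gauss-type map on the strongly convex indicatrix $\Sigma_p$. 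For the image, the inclusion $\mathcal{L}_p(A_p) \subset A^*_p$ is the inequality used in the previous paragraph, while the reverse inclusion follows from a compactness argument: any $\omega \in A^*_p$ attains a positive minimum on a suitable slice of $\Sigma_p$, and the minimizer, suitably rescaled, provides the required preimage.

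Assembling these, $(\nf)_p := \mathcal{L}_p^{-1}(df_p)$ is well-defined and unique at each $p$, and smoothness of $\nf$ as a vector field follows from smoothness of the fibered Legendre map $\mathcal{L} : A \to A^* \subset T^*M$ together with smoothness of $df$. The main obstacle I anticipate is assembling the \emph{global} diffeomorphism property of $\mathcal{L}_p$: the local piece is a one-line differential computation, but global injectivity and the identification of the image with the dual cone both rest on the strong convexity of the indicatrix, which is the substantive input from Finsler convex analysis that I would quote rather than re-derive in full.
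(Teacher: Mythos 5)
Your overall strategy (the fiberwise Legendre transform $\mathcal{L}_p(v)=g_v(v,\cdot)$, local invertibility from the vanishing of the Cartan term, and recovery of the preimage of $df_p$ by minimizing over the indicatrix) is in substance the same as the paper's argument in the case where the gradient turns out to be timelike, and the computation $\partial_{v^k}(g_{ij}(v)v^i)=g_{jk}(v)$ is correct. The genuine gap is that your construction misses exactly the case the lemma\,---\,and the rest of the paper\,---\,is really about: a covector $\omega$ with $\omega|_{A_p}>0$ may vanish on a lightlike ray of $\cc_p=\partial\bar A_p$, and then its Legendre preimage is a \emph{lightlike} vector, lying in $\bar A_p\setminus A_p$. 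Concretely: (i) your $A^*_p$ is not open (it is the closed dual cone minus the origin), so $\mathcal{L}_p$ cannot carry the open set $A_p$ diffeomorphically onto it; by invariance of domain the image of $A_p$ is only the interior of $A^*_p$, namely the covectors strictly positive on $\bar A_p\setminus\{0\}$; (ii) your compactness argument fails precisely for boundary covectors, because $\Sigma_p$ is a noncompact hypersurface asymptotic to $\cc_p$, and if $\ker\omega$ is tangent to $\cc_p$ then the infimum of $\omega$ over $\Sigma_p$ is $0$ and is not attained. Since the paper's notion of gradient only requires $\nf$ to take values in $\bar A$, and since every gradient field used later in the paper is lightlike, this is not an edge case that can be waved away: as written, your argument proves a different statement (existence of a \emph{timelike} gradient if and only if $df>0$ on $\bar A\setminus\{\mathbf 0\}$).

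To close the gap you must treat the tangency case separately, as the paper does: if $\ker(df_p)$ is tangent to $\cc_p$ along the ray of some $v\in\cc_p$, then $df_p$ and $g_v(v,\cdot)=\tfrac{1}{2}dL_v$ are nonzero covectors with the same kernel (the tangent hyperplane to the cone at $v$), hence $df_p=\lambda g_v(v,\cdot)=g_{\lambda v}(\lambda v,\cdot)$ with $\lambda>0$, giving the lightlike gradient $\nf=\lambda v$. Your necessity direction has the mirror-image omission: it assumes $(\nf)_p\in A_p$ and invokes reverse Cauchy--Schwarz for two timelike vectors, whereas you also need the degenerate case $g_v(v,u)>0$ for $v$ lightlike and $u\in A_p$. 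Relatedly, your parenthetical claim that $A_p$ \emph{is} the future timelike cone of $g_{(\nf)_p}$ overstates what the axioms give; only the inclusion of $A_p$ in that cone holds in general, but the inclusion is all you need. With these repairs the Legendre-transform route works and essentially coincides with the paper's proof, including the uniqueness argument via strong convexity of $\Sigma_p$.
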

 \begin{proof}
 For the implication to the right, observe that if $\nf$ is lightlike at $p\in M$, this means that the kernel of $df_p$ is tangent to $\cc_p$. As $A_p$ is convex, it remains on one side of the hyperplane $\ker (df_p)$, and as $\gnf$ is a Lorentzian-type metric (with index $n-1$) it follows that $df_p|_A>0$, because the lightlike cone of $\gnf$ remains on the same side of $\ker(df_p)$. If $\nf$ is timelike at $p\in M$, then $\ker (df_p)$ does not touch $\bar A_p$ and as $df_p(\nf)=\gnf(\nf,\nf)=L(\nf)>0$, it follows that $df_p|_A>0$.
 
 For the implication to the left, we know that $\ker(df)\cap A=\emptyset$. If $\ker(df_p)$ is tangent to $\cc_p$, then there exists $v\in \cc_p\cap \ker(df)$. Then it is easy to see that 
 \begin{equation}\label{df}
 df_p=\lambda g_{v}(v,\cdot)=g_{\lambda v}(\lambda v,\cdot)
 \end{equation}
  and therefore $\nf=\lambda v$. If $\ker(df_p)$ is not tangent to $\cc_p$, then there exists $v\in \Sigma_p$ where the minimum distance between $\ker(df_p)$ and $\Sigma_p$ is attained. Then \eqref{df} holds for some $\lambda$. This also shows that the gradient is unique, because the strict convexity of $\Sigma_p$ implies that the distance is attained in a unique point.
 \end{proof}
\begin{lemma}\label{geoflow}
If a function $f\colon M\rightarrow\RR$ has a gradient field with constant $L$-norm, then its flow is given by geodesics.
\end{lemma}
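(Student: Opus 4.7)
The goal is to establish $\nabla^{\nf}_{\nf}\nf = 0$, since this is precisely the Finslerian geodesic equation satisfied by the integral curves $\gamma$ of $\nf$ (along such a curve, $\dot\gamma = \nf\circ\gamma$, so the affine connection $\nabla^{\dot\gamma}$ coincides pointwise with $\nabla^{\nf}$). To pull this out I will use three facts already established in the preliminaries: (i) the identity $H^f_{\nf}(X,Y)=\gnf(\nabla^{\nf}_X\nf,Y)$; (ii) the symmetry of the Hessian $H^f$; and (iii) almost $g$-compatibility of the Levi-Civita--Chern connection, together with the homogeneity property \eqref{homoCartan} of the Cartan tensor.

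The first step is to differentiate the hypothesis $\gnf(\nf,\nf)=L(\nf)=\text{const}$ along an arbitrary $X\in\mathfrak{X}(M)$. Using almost $g$-compatibility with $V=\nf$ and expanding,
\[
0 = X\bigl(\gnf(\nf,\nf)\bigr) = 2\,\gnf\bigl(\nabla^{\nf}_X\nf,\nf\bigr) + 2\,C_{\nf}\bigl(\nabla^{\nf}_X\nf,\nf,\nf\bigr).
\]
The Cartan term is killed by \eqref{homoCartan} (two of its slots carry the direction $\nf$), leaving $\gnf(\nabla^{\nf}_X\nf,\nf)=0$ for every $X\in\mathfrak{X}(M)$.

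The second step invokes symmetry of $H^f$, which by fact (i) translates into the identity $\gnf(\nabla^{\nf}_X\nf,Y)=\gnf(\nabla^{\nf}_Y\nf,X)$ at $v=\nf$. Taking $Y=\nf$ and using the vanishing just obtained,
\[
\gnf\bigl(\nabla^{\nf}_{\nf}\nf,X\bigr) = H^f_{\nf}(\nf,X) = H^f_{\nf}(X,\nf) = \gnf\bigl(\nabla^{\nf}_X\nf,\nf\bigr) = 0
\]
for every $X\in\mathfrak{X}(M)$. Non-degeneracy of $\gnf$ then forces $\nabla^{\nf}_{\nf}\nf=0$, and translating this pointwise along an integral curve gives $\nabla^{\dot\gamma}_{\dot\gamma}\dot\gamma=0$, so $\gamma$ is a geodesic of $L$.

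\textbf{Main obstacle.} The calculation is short once the right ingredients are in place; the only delicate point is philosophical rather than computational, namely the need to interpret $\nabla^{\nf}_{\nf}\nf$ correctly in the anisotropic setting: here $\nf$ plays two roles, as the $\bar A$-admissible extension $V$ that picks out the affine connection $\nabla^{\scalebox{0.5}{\emph{V}}}$ and as the vector field being differentiated. One must check that this double use is consistent, and in particular that the Cartan correction in almost $g$-compatibility really drops out by \eqref{homoCartan}; this is what makes the argument collapse to its Lorentzian counterpart without further hypotheses.
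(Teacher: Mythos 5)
Your proof is correct and follows essentially the same route as the paper: differentiate $L(\nf)=\gnf(\nf,\nf)$ using almost $g$-compatibility (with the Cartan term killed by \eqref{homoCartan}), then pass through the identity $H^f_{\nf}(X,Y)=\gnf(\nabla^{\nf}_X\nf,Y)$ and the symmetry of the Hessian to conclude $\gnf(\nabla^{\nf}_{\nf}\nf,X)=0$ for all $X$, hence $\nabla^{\nf}_{\nf}\nf=0$. The paper's proof is exactly this chain of equalities, stated more tersely; your explicit handling of the Cartan correction and of the double role of $\nf$ is a faithful elaboration rather than a different argument.
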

\begin{proof}
As the gradient field has constant $L$-norm, it follows that $X(L(\nf))=0$ and then
\[0=2\gnf(\nabla^{\nf}_X \nf,\nf)=2H^f_{\scalebox{0.5}{\emph{$\nf$}}}(X,\nf)=2H^f_{\scalebox{0.5}{\emph{$\nf$}}}(\nf,X)=2\gnf(\nabla^{\nf}_{\nf} \nf,X),\]
and as this holds for all $X\in {\mathfrak X}(M)$, it follows that $\nabla^{\scalebox{0.5}{\emph{$\nf$}}}_{\!\nf}\nf=0$ and $\nf$ is geodesic.
\end{proof}
\section{Lightlike coordinates and their properties}
\label{sec-lightlike}
\label{theorem:nc}
Although lightlike coordinates exist in all dimensions, we present them here in dimension 4, for convenience. In the following, when a chart $(\uu,\varphi)$ is fixed, we will denote by $g_{ij}\colon\bar A\cap T\uu\rightarrow \RR$ the coordinates of the fundamental tensor $g$ in \eqref{fundten}, namely, $g_{ij}(v)=g_{v}(\partial_i,\partial_j)$.

\begin{lemma}[Lightlike coordinates]
Let $(M,L)$ be a Finsler spacetime and $f$ a smooth function defined on an open subset $\mathscr{U} \subseteq M$.  If $N \defeq \nf$ is a lightlike vector field, then there exist coordinates about any point in $\mathscr{U}$ in which the metric $g_{\scalebox{0.5}{N}}$ has components
\beqa
\label{nc}
(g_{ij}(N)) = 
    \left(
      \begin{array}{cccc}
        0 & 1 & 0 & 0\\
        1 &  g_{11}(N) & g_{12}(N) & g_{13}(N)\\
        0 & g_{21}(N) & h_{22} & h_{23}\\
        0 & g_{31}(N) & h_{32} & h_{33}
      \end{array}
    \right),
\eeqa
with $(h_{ij})$ a positive-definite $2 \times 2$ matrix.  
\label{lem-lightlike}
\end{lemma}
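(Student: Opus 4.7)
The plan is, at any point $p\in\mathscr{U}$, to construct a chart $(x^1,x^2,x^3,x^4)$ in which $\partial_1 = N$ and $x^2 = f$, and then to read off the matrix \eqref{nc} from the gradient identity $g_N(N,\cdot)=df(\cdot)$ combined with a signature argument for the lower-right block.

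First I would observe that the lightlike hypothesis forces $df(N) = g_N(N,N) = L(N) = 0$, so $N$ is tangent to the level sets of $f$; simultaneously $N\neq 0$ everywhere on $\mathscr{U}$, since $\mathcal{C}$ excludes the origin (equivalently, by the preceding lemma, $df_p|_A>0$ forces $df_p\neq 0$). Rectifying the nonvanishing vector field $N$ produces a flowbox chart around $p$ in which $N=\partial_{y^1}$, and the remaining transverse coordinates may be chosen freely as $N$-invariant functions whose differentials are linearly independent at $p$. Because $N(f)=0$ and $df_p\neq 0$, the function $f$ is itself an admissible transverse coordinate, so I set $x^1=y^1$, $x^2=f$, and pick any $x^3,x^4$ completing the chart.

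With the chart fixed, the first row and column of $(g_{ij}(N))$ follow in one line from the gradient identity: for each $i$,
\[
g_{1i}(N) = g_N(\partial_1,\partial_i) = g_N(N,\partial_i) = df(\partial_i) = \partial_i(x^2) = \delta_i^2,
\]
which yields $g_{11}(N)=0$, $g_{12}(N)=1$, $g_{13}(N)=g_{14}(N)=0$, the first column then following by symmetry of $g_N$. The entries $g_{22}(N),g_{23}(N),g_{24}(N)$ are left arbitrary, as in \eqref{nc}.

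For the lower-right block, I would argue as follows. The plane $V=\operatorname{span}(\partial_3,\partial_4)$ is $g_N$-orthogonal to the null vector $\partial_1$ (by the vanishing entries just computed), is transverse to $\langle\partial_1\rangle$, and together with $\partial_1$ spans the three-dimensional degenerate hyperplane $\partial_1^{\perp}$. The quotient $\partial_1^{\perp}/\langle\partial_1\rangle$ inherits from the Lorentzian-type tensor $g_N$ a nondegenerate metric of definite signature (the ``screen'' metric), and the natural projection restricts to a $g_N$-isometry on $V$. Hence $g_N|_V$ is definite, and its matrix is precisely $(h_{ij})_{i,j=3,4}$. The main, though minor, obstacle is this signature step; everything else is either the standard flowbox construction or a direct calculation from the gradient identity.
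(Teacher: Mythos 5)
Your proof is correct and follows essentially the same route as the paper's: rectify $N$ to a flowbox chart, use $N(f)=0$ together with $df_p\neq 0$ to substitute $f$ for one transverse coordinate, and read off the first row and column from the gradient identity $g_{\scalebox{0.5}{N}}(N,\cdot)=df$. The only divergence is in one step: for the definiteness of $(h_{ij})$ the paper computes $\det g_{\scalebox{0.5}{N}}=-\det(h_{ij})<0$ and applies Sylvester's criterion, whereas you invoke the definiteness of the screen metric on $N^{\perp}/\langle N\rangle$; both are valid (and both, like the statement of the lemma itself, tacitly use the convention in which spacelike vectors have positive norm, rather than the $(+,-,\dots,-)$ signature declared in the preliminaries).
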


\begin{proof}
Of course, ``lightlike" means that $N$ is nowhere vanishing yet satisfies $L(N)= 0$; the former implies that the level sets $\Lev_c \defeq f^{-1}(c)$ are embedded hypersurfaces, while the latter implies that $N$ has geodesic flow, $\nabla^{\scalebox{0.5}{\emph{N}}}_N{N} = 0$, where $\nabla^{\scalebox{0.5}{\emph{N}}}$ is the Levi-Civita--Chern connection of $L$ (recall Lemma \ref{geoflow}).  At any $p \in \Lev_c$, $g_{\scalebox{0.5}{$N_p$}}(N_p,N_p)  =  0$ implies that the induced metric by $g_{\scalebox{0.5}{$N_p$}}$ on $\Lev_c$ is degenerate.  Let $(x^0,x^1,x^2,x^3)$ denote coordinates within $\uu$ in which  $N  = \partial_0$.  Then by the almost compatibility of $g$ and $\nabla$,
$$
N\big(\gN(N,{\partial_i}\big) = \gN{\underbrace{\nabla^{\scalebox{0.5}{\emph{N}}}_{\!N}N}_{0}}{\partial_i} + \gN(N,N){\underbrace{\nabla^{\scalebox{0.5}{\emph{N}}}_{\!N}\partial_i}_{\cd{\partial_i}{N}}\,} = 0,
$$
so that each $c_i\defeq g_{0i}(N)$, $i=0,\ldots,3$, is independent of $x^0$.  Of course, at least one of these $c_i$'s must be \emph{nonzero}, otherwise each $N_p^{\perp}$ would be four-dimensional.  Let us assume that $c_1 \neq 0$.  Thus at the moment our metric $\gN$ in the coordinates $(x^0,x^1,x^2,x^3)$ takes the form
\beqa
(g_{ij}(N)) = 
    \left(
      \begin{array}{cccc}
        0 & c_1 & c_2 & c_3\\
        c_1 &  g_{11}(N) & g_{12}(N) & g_{13}(N)\\
        c_2 & g_{21}(N) & g_{22}(N) & g_{23}(N)\\
        c_3 & g_{31}(N) & g_{32}(N) & g_{33}(N)
      \end{array}
    \right) \comma c_i  = c_i(x^1,x^2,x^3),\nonumber
\eeqa
with the function $f$ satisfying
\beqa
\label{eqn:c}
\frac{\partial f}{\partial x^0} = \gN(\nf,\partial_0) = 0 \comma \frac{\partial  f}{\partial x^i} = \gN(\nf,\partial_i) = c_i.
\eeqa
These coordinates, however, are not slice coordinates for the level sets $\Lev_c$; to make them so, simply define new coordinates $(\tilde{x}^i)$ by
\beqa
\left\{
\begin{array}{ll}
\tilde{x}^0 = x^0,\nonumber\\
\tilde{x}^1 = f(x^1,x^2,x^3),\nonumber\\
\tilde{x}^2 = x^2,\nonumber\\
\tilde{x}^3 = x^3.\nonumber\end{array}
\right.
\eeqa
These new coordinates satisfy $\nabla\tilde{x}^1 = \partial /\partial \tilde{x}^0$, and they are indeed slice coordinates for $\Lev_c$:
$$
\Lev_c = \left\{q \in \uu\,:\,\tilde{x}(q) = \left(\tilde{x}^0(q),c,\tilde{x}^2(q),\tilde{x}^3(q)\right)\right\}\cdot
$$
Thus each $N_q^{\perp} = T_q\Lev_c = \text{span}\,\{\partial/\partial \tilde{x}^0|_q,\partial/\partial \tilde{x}^2|_q,\partial/\partial \tilde{x}^3|_q\}$, with
$$
\frac{\partial}{\partial \tilde{x}^0} = \frac{\partial}{\partial x^0}\comma 
\frac{\partial}{\partial \tilde{x}^1} \overset{\eqref{eqn:c}}{=} \frac{1}{c_1}\frac{\partial}{\partial x^1} \comma
\frac{\partial}{\partial \tilde{x}^i}  \overset{\eqref{eqn:c}}{=} -\frac{c_i}{c_1}\,\frac{\partial}{\partial x^1} + \frac{\partial}{\partial x^i},\nonumber
$$
and $\partial/\partial \tilde{x}^0$ has a nice relationship with the other coordinate basis vectors:
$$
\gN(\partial/\partial \tilde{x}^0,\partial/\partial \tilde{x}^1) = 1 \comma \gN(\partial/\partial \tilde{x}^0,\partial/\partial \tilde{x}^i) = -c_i + c_i = 0.
$$
The metric in the new coordinates $(\tilde{x}^i)$ is now precisely in the form of \eqref{nc}.
Finally, note that since $\partial/\partial \tilde{x}^2$ and $\partial/\partial \tilde{x}^3$ are both orthogonal to the lightlike vector $\partial/\partial \tilde{x}^0$, they must satisfy $g_{22}(N) > 0, g_{33}(N) > 0$.  It follows that each embedded 2-submanifold defined by
\beqa
\label{eqn:hpos}
\Lambda_{\scalebox{.5}{\emph{b,c}}} \defeq \left\{q \in \uu\,:\,\tilde{x}(q) = \left(b,c,\tilde{x}^2(q),\tilde{x}^3(q)\right)\right\} \subseteq \Lev_c,
\eeqa
with induced metric $h_{\scalebox{0.5}{\emph{b,c}}} \defeq \sum_{i=2}^3 g_{ij}(b,c,\tilde{x^2},\tilde{x}^3,N)d\tilde{x}^i\otimes d\tilde{x}^j$, is Riemannian; indeed, since $\gN$ has negative determinant,
\beqa
\label{eqn:hpos}
\text{det}\,\gN = -(g_{22}g_{33}-g_{23}^2) < 0 \imp g_{22}g_{33}-g_{23}^2 > 0.
\eeqa
Together with the fact that $g_{22} > 0$, it follows that the two leading submatrices of the $2 \times 2 $ matrix $\begin{pmatrix}g_{22}&g_{23}\\g_{32}&g_{33} \end{pmatrix}$ have positive determinant; thus this submatrix is positive definite. Renaming each $g_{ij}(N)$ to $h_{ij}$ for $i,j=2,3$, the proof is complete.
\end{proof}

\begin{lemma}\label{gradientexist} Given and arbitrary lightlike vector $v_0$ of a Finsler spacetime $(M,L)$, it is possible to extend it to a lightlike gradient vector field in a certain neighborhood of $\pi(v_0)$.
\end{lemma}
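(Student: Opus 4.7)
The approach is to reformulate the problem as a Cauchy problem for a first-order PDE on the cotangent bundle. By the gradient-existence lemma of Section~\ref{sec:Preliminaries} together with formula~\eqref{df} from its proof, it suffices to produce a smooth function $f$ on a neighborhood $\uu$ of $p\defeq\pi(v_0)$ with $df_p = \omega_0\defeq g_{v_0}(v_0,\cdot)$ and such that the resulting gradient $\nf$ is lightlike throughout $\uu$ (its value at $p$ is then $v_0$ by uniqueness). Note that $\omega_0|_{A_p}>0$ automatically, by the same argument as in that proof: $\ker\omega_0$ is the tangent hyperplane to $\cc_p$ at $v_0$, hence a supporting hyperplane of the convex cone $\bar A_p$.

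I would then pass to the cotangent side via the Legendre map $\mathcal{L}\colon\bar A\to T^*M$, $\mathcal{L}(v)\defeq g_v(v,\cdot)$, whose components are $\mathcal{L}_k(v)=g_{kl}(v)v^l$. A short computation gives
\[
\frac{\partial\mathcal{L}_k}{\partial v^j}(v) \;=\; g_{kj}(v) + 2\,C_v(\partial_j,\partial_k,v) \;=\; g_{kj}(v),
\]
where the Cartan-tensor term vanishes by the homogeneity relation~\eqref{homoCartan}. Hence $\mathcal{L}$ is a smooth diffeomorphism of $\bar A$ onto its image $\bar A^{*}\subset T^*M$, carrying $\cc$ to $\partial\bar A^{*}$. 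Setting $L^{*}\defeq L\circ\mathcal{L}^{-1}$ (smooth on $\bar A^{*}$ up to and including the boundary), the identity $\nf=\mathcal{L}^{-1}(df)$ converts the lightlike condition on $\nf$ into the eikonal equation $L^{*}(df)\equiv 0$, with initial constraint $df_p=\omega_0\in\partial\bar A^{*}$.

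To solve this eikonal locally, choose coordinates $(x^0,x^1,x^2,x^3)$ centred at $p$ with $v_0=\partial_0|_p$. Then $(\omega_0)_0 = g_{v_0}(v_0,v_0) = L(v_0) = 0$, while some $(\omega_0)_i$ with $i\geq 1$ must be nonzero by the non-degeneracy of $g_{v_0}$. On the transverse hypersurface $S\defeq\{x^0=0\}$, prescribe the affine datum $\varphi(x^1,x^2,x^3)\defeq \sum_{i=1}^3 (\omega_0)_i\,x^i$, so that $d\varphi|_p = \omega_0|_{TS}$. The non-characteristic condition for the Cauchy problem reduces to $\partial L^{*}/\partial p_0|_{\omega_0}\neq 0$; differentiating $L^{*}=L\circ\mathcal{L}^{-1}$ and using $\partial L/\partial v^j = 2g_v(v,\partial_j)$ yields $\partial L^{*}/\partial p_i|_{\omega_0} = 2\,(\mathcal{L}^{-1}(\omega_0))^{i} = 2v_0^{i} = 2\delta^{i}_0$, so non-characteristicity holds at $p$. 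The classical theory of first-order fully nonlinear PDEs then produces a smooth solution $f$ on a neighborhood $\uu$ of $p$ with $L^{*}(df)\equiv 0$ and $f|_S=\varphi$; in particular $df_p=\omega_0$. Shrinking $\uu$ if necessary, $df|_A>0$ by continuity, so the gradient-existence lemma supplies a unique $\nf$ on $\uu$, with $L(\nf)=L^{*}(df)=0$ and $(\nf)_p=v_0$.

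The most delicate step is ensuring that $L^{*}$ remains smooth on the closure $\bar A^{*}$, so that the Cauchy problem is smooth even though the initial covector $\omega_0$ sits on the characteristic set $\partial\bar A^{*}$. This smoothness is furnished by the computation above, in which the Cartan-tensor homogeneity kills the offending derivative of the fundamental tensor and makes $\mathcal{L}$ a diffeomorphism up to the boundary; this relies ultimately on the standing hypothesis that $L$ is smooth on all of $\bar A$. Once $L^{*}$ is smooth, the classical method of characteristics applies without modification, the identification $L(\nf)=L^{*}(df)$ is tautological from the Legendre transform, and the lightlike extension is obtained essentially for free.
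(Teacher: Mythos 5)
Your argument is correct in substance but takes a genuinely different route from the paper's. You solve the Finslerian eikonal equation $L^{*}(df)=0$ by the method of characteristics, after passing to the cotangent bundle via the Legendre map and checking (using the homogeneity \eqref{homoCartan} of the Cartan tensor) that this map is a fiberwise diffeomorphism up to the lightcone and that the Cauchy problem on $\{x^0=0\}$ is non-characteristic at $\omega_0$. The paper instead argues synthetically: it takes a local splitting $I\times B$ with spacelike slices, chooses a spacelike surface $S_0$ through $\pi(v_0)$ orthogonal to $v_0$, exponentiates the lightlike normal field along the cylinder $I\times S_0$, and defines $f$ as the resulting ``universal time''; its level sets are the null hypersurfaces (horismoi) swept out by the orthogonal lightlike geodesics, and their degeneracy forces $\nf$ to be lightlike. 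The two constructions produce the same object---your characteristics are exactly the paper's orthogonal lightlike geodesics and your level sets are its null hypersurfaces---but yours makes the analytic machinery explicit (non-characteristicity, the role of the Cartan term, why $\nf_p=v_0$ exactly), while the paper's stays on the tangent bundle and makes the causal-geometric picture explicit. One point you should make airtight: the classical existence theorem for $F(x,Du)=0$ assumes $F$ smooth on an \emph{open} set, whereas your $L^{*}$ is defined only on the closed set $\bar A^{*}$ with the initial covector $\omega_0$ sitting on its boundary; either extend $L$ smoothly across $\cc$ (possible since $\bar A$ has smooth boundary and $L$ is smooth up to it, and the extension does not alter the zero level set near $\omega_0$ because $dL^{*}|_{\omega_0}\neq 0$ there), or note that the Hamiltonian is conserved along characteristics, so every characteristic issued from admissible data stays on $\partial\bar A^{*}=\mathcal{L}(\cc)$, where $L^{*}$ and all its derivatives are available. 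With that sentence added, the proof is complete.
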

\begin{proof}
First, consider a local splitting $I\times B$ of $M$ with $I\subset \RR$ in such a way that $\{t_0\}\times B$ is spacelike for all $t_0\in I$ and $\partial_t$ is timelike. Then consider a surface $S_0$ in $B$ diffeomorphic to $S^2$ which contains $\pi(v_0)$ and is orthogonal to $v_0$, with $v_0$ pointing to the exterior region of $I\times S_0$. Observe that the cylinder $I\times S_0$ is a hypersurface and that it admits a smooth lightlike vector field $N$ along it with the following property: for any $(t_0,p_0)\in I\times S_0$, $N_{(t_0,p_0)}$ is orthogonal to $\{t_0\}\times S_0$. By the Inverse Function Theorem, the exponential map restricted to the bundle generated by $N$ along $S_0$ is a local diffeomorphism. In particular, one can construct local coordinates around $\pi(v_0)$ using product coordinates in $I\times S_0$ and then the one of the exponential map $s\rightarrow \exp_{(t,p)}(sN)$.  All this together implies that the projection onto $I$ in these coordinates provides a function $f\colon\uu\subset M\rightarrow \RR$ whose level sets $f^{-1}(t_0)$ are the hypersurfaces obtained as the union of all the lightlike geodesics passing through the points $(t_0,p)$ with $p\in S_0$ and with velocity $N_{(t_0,p)}$. Reducing $\uu$ if necessary, we can assume that these hypersurfaces coincide with the horismos $E^+(\{t_0\}\times S_0)\cup E^-(\{t_0\}\times S_0)$. Therefore, they are degenerate and the gradient vector field $\nf$ must be lightlike.
\end{proof}
Observe that the last Lemma can be interpreted in the following way. The lightlike gradient vector field can be thought of as the vector field tangent to the light rays departing orthogonally from a given spacelike surface and its temporal cylinder of a given interval of a universal time.

There is a direct relationship between the domain of validity of lightlike coordinates and the existence of focal points along the  geodesic integral curves of $N$.  First, define $\Delta_4 \defeq \sqrt{-\text{det}\,g_{ij}(N)}$ and 
\beqa
    \Delta \defeq \left|
      \begin{array}{cc}
        h_{22} & h_{23}\\
        h_{32} & h_{33}
              \end{array}
    \right|^{\frac{1}{2}},
    \nonumber
\eeqa
and observe that $\Delta_4 = \Delta$.  Then the relationship is as follows:

\begin{lemma}
In lightlike coordinates, a point $p \in \uu$ is a focal point of \eqref{eqn:hpos} along the geodesic integral curve of $N$ through $p$ if and only if $\Delta|_p = 0$.
\end{lemma}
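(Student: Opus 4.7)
My plan is to identify the coordinate vector fields $Y_i \defeq \partial/\partial\tilde{x}^i$, $i=2,3$, along a geodesic integral curve $\gamma$ of $N$ as the two canonical Jacobi fields along $\gamma$ adapted to an initial slice $\Lambda_{b_0,c}$ of the form \eqref{eqn:hpos} (with $b_0$ the $\tilde x^0$-value of $\gamma(0)$), and then to translate the condition $\Delta|_p=0$ into linear dependence of $Y_2,Y_3$ modulo $N$ at $p$, which is precisely the defining condition for $p$ to be a null focal point.

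By Lemma \ref{geoflow}, $N$ is a Chern-geodesic vector field, so in the chart of Lemma \ref{lem-lightlike} every integral curve of $N$ has the form $\tilde x^0\mapsto(\tilde x^0,c,a,b)$ for constants $c,a,b$. Embedding $\gamma$ in the two-parameter variation
\[
\Phi(\tilde x^0,u,v) \defeq (\tilde x^0,\, c,\, a+u,\, b+v),
\]
every $\tilde x^0$-curve of $\Phi$ is a lightlike Chern-geodesic starting on $\Lambda_{b_0,c}$. The variational vector fields $\partial\Phi/\partial u$ and $\partial\Phi/\partial v$ along $\gamma$ coincide with $Y_2$ and $Y_3$, so $Y_2,Y_3$ satisfy the Jacobi equation for $\nabla^{\scalebox{0.5}{\emph{N}}}$ along $\gamma$, with initial values at $\gamma(0)$ spanning $T_{\gamma(0)}\Lambda_{b_0,c}$.

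Next I would observe that from the block form \eqref{nc} one has $\gN(N,Y_i)=g_{0i}(N)=0$ identically in $\uu$ for $i=2,3$, so $Y_2,Y_3$ are sections of the lightlike orthogonal $N^\perp$ along $\gamma$. Their Gram matrix with respect to $\gN$ is exactly the block $(h_{ij})$ of \eqref{nc}, which by the positive definiteness of $\gN$ on the screen quotient $N^\perp/\langle N\rangle$ (cf.\ Lemma \ref{lem-lightlike}) is degenerate iff the classes $[Y_2]_p,[Y_3]_p$ in $N^\perp_p/\langle N_p\rangle$ are linearly dependent. Such linear dependence means that some non-trivial combination $J=a^2Y_2+a^3Y_3$ satisfies $J|_p\in\mathbb{R}\,N|_p$, which is precisely the null-focal condition of $\Lambda_{b_0,c}$ along $\gamma$ at $p$. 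Therefore $p$ is a focal point iff $\det(h_{ij})|_p=\Delta^2|_p=0$, as claimed.

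The principal obstacle is the first step: although the calculation formally mirrors the Lorentzian case, one must run the variation-of-geodesics argument inside the anisotropic calculus of \cite{Jav19,Jav20} to confirm that these variational fields really solve the Chern-Jacobi equation. The potential nuisance is the appearance of Cartan-tensor corrections weighted by $\nabla^{\scalebox{0.5}{\emph{N}}}_{\cdot}N$; since $\nabla^{\scalebox{0.5}{\emph{N}}}_N N=0$ along $\gamma$, these terms should remain in the $N$-direction on $\gamma$ and hence not disturb the screen-space Gram-determinant test that drives the equivalence, but making this rigorous is where the genuine work lies.
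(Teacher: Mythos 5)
Your setup and the direction ``$\Delta|_p=0\Rightarrow p$ is a focal point'' agree in substance with the paper: both proofs rest on the observation that the coordinate fields $\partial_2,\partial_3$ are $\Lambda$-Jacobi fields along $\gamma$ (the paper gets this directly from $[\partial_i,N]=0$ and $\nabla^{\scalebox{0.5}{\emph{N}}}_NN=0$, you via a variation through geodesics), and your use of the positive-definite screen metric on $N^{\perp}/\langle N\rangle$ to convert singularity of $(h_{ij})$ into linear dependence of $[\partial_2]_p,[\partial_3]_p$ is a clean version of the paper's step. (One definitional wrinkle: the paper's focal condition is $J(b)=0$, whereas yours is $J(b)\in\mathbb{R}N_p$; these can be reconciled by subtracting the $\Lambda$-Jacobi field $t\mapsto tN$, but you should say so.)

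The genuine gap is in the converse, ``$p$ focal $\Rightarrow\Delta|_p=0$,'' which you dispose of by declaring that degeneracy of the specific two-parameter family $\{[Y_2],[Y_3]\}$ \emph{is} the focal condition. It is not, a priori: a focal point is witnessed by \emph{some} nontrivial $\Lambda$-Jacobi field $J$ vanishing at $p$, and such a $J$ need not be a constant-coefficient combination of $Y_2,Y_3$; writing $J=\sum_i f^i\partial_i$, it can happen that all the components $f^i(b)$ vanish at $p$ while the frame $\{\partial_i|_p\}$ (and hence $\Delta$) remains nondegenerate, so no contradiction is immediate. The paper closes exactly this case: if the $f^i(b)$ are not all zero one gets linear dependence of the $\partial_i|_p$ and hence $\Delta_4|_p=\Delta|_p=0$; if they are all zero, then $J'(b)=\dot f^i(b)\partial_i|_p$ with some $\dot f^i(b)\neq 0$, and the conservation law $\gN(V',W)=\gN(V,W')$ for pairs of $\Lambda$-Jacobi fields, applied to $J$ and $\partial_j$, yields the linear system $g_{ji}(N_p)\dot f^i(b)=\gN(J,\partial_j')|_p=0$, which forces $\Delta_4|_p=0$ anyway. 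Without this (or an argument that the focal Jacobi fields of $\Lambda$ are exactly the span of $Y_2,Y_3$ modulo $tN$), your ``iff'' is only an ``if.'' By contrast, the anisotropic-calculus issue you single out as the main difficulty is handled in two lines in the paper and is not where the work lies.
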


\begin{proof}
Let $(x^0,x^1,x^2,x^3)$ be lightlike coordinates as in the Lemma \ref{theorem:nc}, with  $N = \partial_0$, and let $\Lambda$ be a Riemannian 2-submanifold as in \eqref{eqn:hpos}.  We begin by observing that $\partial_0,\partial_1,\partial_2,\partial_3$ are all Jacobi fields along any geodesic integral curve $\gamma(x^0)$ of $N$ starting in $\Lambda$.  Indeed, setting $J \defeq \partial_i$, and noting that $[J,N] = 0$ and $\nabla^{\scalebox{0.5}{\emph{N}}}_NN = 0$, we have that
$$
J'' = \nabla^{\scalebox{0.5}{\emph{N}}}_N\nabla^{\scalebox{0.5}{\emph{N}}}_N J = \nabla^{\scalebox{0.5}{\emph{N}}}_N\nabla^{\scalebox{0.5}{\emph{N}}}_J N = R_{\scriptscriptstyle N}(N,J)N = -R_{\scriptscriptstyle N}(J,N)N,
$$
where $R_{\scriptscriptstyle N}$ is the $(1,3)$-curvature tensor.  Now suppose that $\Delta|_p = 0$ at a point $p$ along $\gamma$.  Then $\partial_2|_p,\partial_3|_p$ must be linearly dependent, hence some nontrivial linear combination of the two gives the zero vector at $p$.  If we extend this linear combination as is to a vector field $J(x^0)$ along $\gamma$, then $J$ will be a Jacobi field.  In fact it is a $\Lambda$-Jacobi field, since $J(0) \in T_{\gamma(0)}\Lambda$ and since $\text{tan}_{\Lambda} J'(0) = \text{tan}_{\Lambda}(\nabla^{\scalebox{0.5}{\emph{N}}}_{J(0)}N)$; we thus conclude that $p$ is a focal point of $\Lambda$ along $\gamma$.  Now for the converse.  Suppose that $p$ is a focal point of $\Lambda$ along the geodesic integral curve $\gamma(x^0)$ of $N$, with $\gamma(b) = p$; let $J\colon [0,b] \lra \uu$ denote the corresponding $\Lambda$-Jacobi field, with $J(b)$ its (zero) value at $p$:
$$
J(b) = \sum_{i=0}^3 f^i(b)\partial_i\big|_{p} = 0.
$$
(In fact $f^1$ is identically zero, since $J$ is orthogonal to $\gamma$.) If the $f^i(b)$'s are not all zero, then we are done, since we would thus have a nontrivial linear combination of the $\partial_i|_p$'s equalling zero, which can happen only if $\Delta_4|_p = \Delta|_p = 0$.  Thus, assume that each $f^i(b) = 0$, in which case
$$
J'(b) = \dot{f}^i(b)\partial_i|_p + \cancelto{0}{f^i(b)\nabla_{N(b)}^{N(b)}\partial_i},
$$
where at least one of $\dot{f}^i(b)$ is nonzero, for otherwise $J(b) = J'(b) = 0$ and so $J$ would have to be trivial.  Next, because each $\partial_i$ is a Jacobi field, and because any two $\Lambda$-Jacobi fields $V, W$ along a geodesic satisfy
$
\gN(V',W) = \gN(V,W')
$
(see, e.g., \cite[Prop. 3.18]{JavSoa15}), we have that
$$
\underbrace{\,\gN(J',\partial_j)|_p\,}_{\dot{f}^i(b)g_{ij}(N)} = \gN(J,\partial_j')|_p = 0,
$$
the latter because $J(b) = 0$.  Thus we've arrived at the system of equations
$$
g_{ji}(N_p)\dot{f}^i(b) = 0,
$$
with the $\dot{f}^i(b)$'s not all zero.  This can only happen if $\Delta_4|_p = 0$, in which case $\Delta_p = 0$ and the proof is complete.
\end{proof}

\section{Finsler pp-waves and Brinkmann coordinates}
\label{sec-finslerpp}
Before proceeding to Finsler pp-waves, we need to establish when our lightlike gradient vector field is parallel. This is achieved by the following lemma.
\begin{lemma}
	\label{lemma:pp-waves}
	A lightlike gradient vector field $N=\nf$ is parallel if and only if all components of the metric \eqref{nc} are independent of $\tilde{x}^0$. 
\label{lem-lightlike2}
\end{lemma}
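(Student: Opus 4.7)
The plan is to work directly in the lightlike coordinate chart $(\tilde x^0,\tilde x^1,\tilde x^2,\tilde x^3)$ constructed in Lemma \ref{lem-lightlike}, in which $N=\partial/\partial\tilde x^0$ and the first row and column of $(g_{ij}(N))$ are the constant tuple $(0,1,0,0)$. Parallelism of $N$ means $\nabla^{\scalebox{0.5}{\emph{N}}}_{\partial_k} N =0$ for every $k$, and since $g_N$ is nondegenerate this is equivalent to $g_N(\nabla^{\scalebox{0.5}{\emph{N}}}_{\partial_k}N,\partial_j)=0$ for all $j,k$. The strategy is to compute this last quantity from the Koszul formula \eqref{koszul} and reduce it to a single $\tilde x^0$-derivative of a metric component.

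Concretely, I apply \eqref{koszul} with $V=N$, $X=\partial_k$, $Y=N=\partial_0$, $Z=\partial_j$. Because coordinate fields commute, every Lie bracket term drops out. Two of the three Cartan terms, $C_N(\nabla^{\scalebox{0.5}{\emph{N}}}_{\partial_k}N,\partial_0,\partial_j)$ and $C_N(\nabla^{\scalebox{0.5}{\emph{N}}}_{\partial_j}N,\partial_k,\partial_0)$, vanish immediately by the homogeneity property \eqref{homoCartan} since $N=\partial_0$ occupies one slot. The third Cartan term contains $\nabla^{\scalebox{0.5}{\emph{N}}}_{\partial_0}N=\nabla^{\scalebox{0.5}{\emph{N}}}_N N$, which is zero by Lemma \ref{geoflow} applied to $f$ (its gradient $N$ is lightlike, hence of constant $L$-norm $0$). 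Thus all Cartan contributions disappear and the Koszul formula collapses to
\begin{equation*}
2\,g_N(\nabla^{\scalebox{0.5}{\emph{N}}}_{\partial_k}N,\partial_j) \;=\; \partial_k g_{0j}(N)\;-\;\partial_j g_{k0}(N)\;+\;\partial_0 g_{jk}(N).
\end{equation*}

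The next step is to kill the first two terms using the special form of the metric in lightlike coordinates: the column $g_{i0}(N)$ and the row $g_{0j}(N)$ are each the constant tuple $(0,1,0,0)$, so $\partial_k g_{0j}(N)=\partial_j g_{k0}(N)=0$ for every $j,k$. The identity therefore simplifies to
\begin{equation*}
2\,g_N(\nabla^{\scalebox{0.5}{\emph{N}}}_{\partial_k}N,\partial_j) \;=\; \partial_0 g_{jk}(N),
\end{equation*}
valid for all indices $j,k$. Nondegeneracy of $g_N$ then gives both implications at once: $\nabla^{\scalebox{0.5}{\emph{N}}}_{\partial_k}N=0$ for every $k$ if and only if $\partial_0 g_{jk}(N)=0$ for every $j,k$, i.e., all entries of the matrix \eqref{nc} are independent of $\tilde x^0$.

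The only delicate step, and the one I would flag as the main obstacle, is the bookkeeping of the three Cartan terms in \eqref{koszul}: one has to recognize that $N$ itself sits in one slot of two of them (so that \eqref{homoCartan} applies) and that the remaining one vanishes precisely because Lemma \ref{geoflow} forces $\nabla^{\scalebox{0.5}{\emph{N}}}_N N=0$. Once this vanishing is established, the remainder is a short computation in coordinates, so no further subtlety arises.
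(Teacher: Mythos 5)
Your proof is correct and follows essentially the same route as the paper: both arguments apply the Koszul formula \eqref{koszul} in the lightlike coordinates of Lemma \ref{lem-lightlike}, discard the Lie bracket and Cartan terms, and reduce everything to $2\,g_{\scalebox{0.5}{N}}(\nabla^{\scalebox{0.5}{\emph{N}}}_{\partial_k}N,\partial_j)=\partial_0 g_{jk}(N)$ plus nondegeneracy. The only (harmless) difference is that you obtain $\nabla^{\scalebox{0.5}{\emph{N}}}_{N}N=0$ once and for all from Lemma \ref{geoflow}, which lets you treat both implications with a single computation, whereas the paper re-derives this fact from the Koszul formula as a first step of the converse direction; your explicit bookkeeping of the three Cartan terms is also slightly more careful than the paper's.
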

\begin{proof}
Assume that $\nabla^{\scalebox{0.5}{\emph{N}}}_XN=0$ for all $X\in \mathfrak{X}(M)$. Consider $\partial_i,\partial_j$ with $i,j=1,\ldots,n$ and observe that by \eqref{nc}, we know that $\gN(N,\partial_i)=0$ if $2\leq i\leq n$ and $\gN(N,\partial_1)=1$. By the Koszul formula \eqref{koszul}, taking into account that $N=\partial_0$ and all the Lie Brackets will vanish,
\[ 0=2 \gN(\nabla^{\scalebox{0.5}{\emph{N}}}_{\partial_i}N,\partial_j)=\partial_i (\gN(N,\partial_j))-\partial_j (\gN(N,\partial_i))+N (\gN(\partial_i,\partial_j)), \]
which implies that $N (\gN(\partial_i,\partial_j))=0$ for all $i,j=1,\ldots, n$, namely, the coefficients $g_{ij}(N)$ do not depend on the coordinate $x^0$.

Assume now that all the $g_{ij}(N)$ do not depend on $x^0$. Let us see first that $\nabla^{\scalebox{0.5}{\emph{N}}}_NN=0$. Using the Koszul formula \eqref{koszul},
\[2 \gN(\nabla^{\scalebox{0.5}{\emph{N}}}_NN,\partial_j)=N (\gN(N,\partial_j))-\partial_j (\gN(N,N))+N (\gN(N,\partial_j))=0\]
for any $j=0,\ldots,n$, which implies that $\nabla^{\scalebox{0.5}{\emph{N}}}_NN=0$. Using this identity and the Koszul formula once more,
\[ 2 \gN(\nabla^{\scalebox{0.5}{\emph{N}}}_{\partial_i}N,\partial_j)=\partial_i (\gN(N,\partial_j))-\partial_j (\gN(N,\partial_i))+N (\gN(\partial_i,\partial_j))=0, \]
for any $i,j=0,\ldots,n$, which implies that $\nabla^{\scalebox{0.5}{\emph{N}}}_{\partial_i}N=0$.
\end{proof}

In the following, given an $\bar A$-admissible $N$ defined in some open subset $\uu\subset M$, we will use the notation $ \Gamma(N^{\perp})$ for the space of vector fields $X\in\mathfrak{X}(\uu)$ such that $\gN(N,X)=0$.
\begin{thm}[pp-waves; \cite{globke}]
	\label{thm:pp-waves}
If $N$ is a parallel lightlike gradient vector field, then the curvature endomorphism $R_{\scriptscriptstyle N}$  satisfies
	\beqa
	\label{eqn:ppwave}
	R_{\scriptscriptstyle N}(X,Y)Z = 0,
	\eeqa
	for all $X, Y, Z \in \Gamma(N^{\perp})$, if and only if there exist local coordinates $(v,u,x,y)$ in which \eqref{nc} takes the form
	\beqa
	\label{pp-wave1}
	(g_{ij}(N)) = 
	\left(
	\begin{array}{cccc}
		0 & 1 & 0 & 0\\
		1 &  H & 0 & 0\\
		0 & 0 & 1 & 0\\
		0 & 0 & 0 & 1
	\end{array}
	\right),
\label{eq-brinkmann}
	\eeqa
	where $g_{uu}(N) \defeq H(u,x,y)$. Such a metric is called a \emph{Finsler pp-wave (}expressed in so-called \emph{Brinkmann coordinates \cite{brinkmann})}.
\end{thm}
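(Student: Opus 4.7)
The plan is to reduce the statement to its classical Lorentzian counterpart in \cite{globke} by exploiting the parallelism hypothesis. Because $N$ is parallel for the Chern connection, the preliminaries already tell us two things: the Levi-Civita--Chern connection $\nabla^{\scalebox{0.5}{\emph{N}}}$ coincides with the Levi-Civita connection of the Lorentzian metric $\gN$, and the Chern curvature $R_{\scriptscriptstyle N}$ coincides with the Riemann tensor of $\gN$. Thus both the curvature condition on $\Gamma(N^{\perp})$ and the assertion that local coordinates bring the components $g_{ij}(N)$ to the Brinkmann form \eqref{pp-wave1} become statements purely about the Lorentzian metric $\gN$ admitting a parallel lightlike gradient, and the desired equivalence is a direct transcription of the classical Lorentzian pp-wave characterization.

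For the easy direction, assume the metric takes the form \eqref{pp-wave1} in coordinates $(v,u,x,y)$ with $N=\partial_v$ and $H=H(u,x,y)$. A direct computation of the Christoffel symbols of $\gN$ shows that every nonvanishing symbol is a scalar multiple of a first partial derivative of $H$ and carries the index $u$ among its two lower indices. Since $N^{\perp}=\mathrm{span}(\partial_v,\partial_x,\partial_y)$ contains no $\partial_u$ factor, the standard formula for curvature in terms of Christoffels immediately gives $R_{\scriptscriptstyle N}(X,Y)Z=0$ for all $X,Y,Z\in\Gamma(N^{\perp})$.

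For the converse, start from the lightlike coordinates $(\tilde{x}^0,\tilde{x}^1,\tilde{x}^2,\tilde{x}^3)$ of Lemma \ref{lem-lightlike}, in which $N=\partial_{\tilde{x}^0}$ and the metric has the form \eqref{nc}; by Lemma \ref{lem-lightlike2}, parallelism of $N$ forces all $g_{ij}(N)$ to be independent of $\tilde{x}^0$. Parallelism of $N$ combined with $N\in N^{\perp}$ makes the distribution $N^{\perp}$ both integrable and totally geodesic, so each leaf $\Lev_c$ is foliated by the orbits of $N$ and has a $2$-dimensional Riemannian quotient whose tangent space is naturally identified with the screen space $N^{\perp}/\mathrm{span}(N)$. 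The hypothesis $R_{\scriptscriptstyle N}|_{N^{\perp}}=0$, together with a Gauss-type identity for this degenerate submersion, forces that quotient to be flat. One then picks Euclidean coordinates $(x,y)$ on one transverse slice $\Lambda_{b,c}$ and extends them smoothly along the $\partial_{\tilde{x}^0}$- and $\partial_{\tilde{x}^1}$-directions so that the $2\times 2$ block $(h_{ij})$ of \eqref{nc} reduces to $\delta_{ij}$. A final shear $v=\tilde{x}^0+\phi(\tilde{x}^1,x,y)$, with $\phi$ obtained by integrating a first-order system whose compatibility is ensured by parallelism of $N$, absorbs the remaining entries $g_{12}(N)$ and $g_{13}(N)$, leaving the lone nontrivial component $g_{uu}(N)$, which is renamed $H(u,x,y)$.

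The main obstacle is this converse: extracting intrinsic flatness of the transverse $2$-manifold from the pointwise curvature condition on $N^{\perp}$ (a Gauss-equation argument in the degenerate setting), then globally synchronizing the Euclidean coordinates across the family of slices $\Lambda_{b,c}$ and solving the resulting compatibility equation for the shear $\phi$. After the Chern-to-Levi-Civita reduction described above these are exactly the steps carried out in \cite{globke} for the Lorentzian case, so the cleanest presentation is to invoke that result rather than reproduce the coordinate construction in detail.
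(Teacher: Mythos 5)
Your overall strategy coincides with the paper's: parallelism of $N$ identifies $\nabla^{\scalebox{0.5}{\emph{N}}}$ and $R_{\scriptscriptstyle N}$ with the Levi-Civita connection and Riemann tensor of the Lorentzian metric $\gN$, the easy direction is the same Christoffel computation the paper performs, and the converse runs through Lemma \ref{lem-lightlike2}, flatness of the transverse $2$-dimensional geometry via a Gauss-type identity, and a coordinate construction. (The paper avoids your ``degenerate submersion'' by applying the ordinary Gauss equation to the spacelike $2$-submanifolds $\Lambda_{\scalebox{.5}{\emph{b,c}}}$ of \eqref{eqn:hpos}, which is only a difference of packaging.)

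There is, however, a genuine gap in your last step of the converse. Once the transverse block is reduced to $\delta_{ij}$, the remaining entries form the one-form $a = g_{12}(N)\,dx + g_{13}(N)\,dy$ on each slice, and the shear $v=\tilde{x}^0+\phi$ removes them only if $a$ is closed, i.e.\ $\partial_x g_{13}(N)=\partial_y g_{12}(N)$. This is \emph{not} ensured by parallelism of $N$: parallelism alone yields only the $\tilde{x}^0$-independent Walker form \eqref{nc}, and if it forced the Brinkmann form then the curvature hypothesis \eqref{eqn:ppwave} would be redundant. Worse, closedness of $a$ does not even follow from \eqref{eqn:ppwave}: the Lorentzian metric $2\,dv\,du - y\,du\,dx + x\,du\,dy + dx^2 + dy^2$ has $N=\partial_v=\nabla u$ parallel, lightlike and a gradient, satisfies $R_{\scriptscriptstyle N}(X,Y)Z=0$ for all $X,Y,Z\in\Gamma(N^{\perp})$ (all Christoffel symbols with both lower indices in $\{v,x,y\}$ vanish), yet $da=dx\wedge dy\neq 0$. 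This metric \emph{is} a pp-wave (it is the plane wave $H=-\tfrac14(x^2+y^2)$ seen in a rotating transverse frame), but exhibiting the Brinkmann form requires a $u$-dependent rotation of $(x,y)$ in addition to a shear. Synchronizing the flat transverse frames across the slices, which supplies exactly this rotation, is where the real work lies; the paper does it by parallel transporting the frame $V,W$ inside each $\Lambda_{\scalebox{.5}{\emph{b,c}}}$ and then showing that $X^{\flat}$ and $Y^{\flat}$ are closed. Deferring the construction to \cite{globke} is legitimate, but the mechanism you substitute for it would fail.
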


\begin{proof}
Consider each embedded 2-submanifold $\Lambda_{\scalebox{.5}{\emph{b,c}}}$ given by \eqref{eqn:hpos}, with its corresponding induced Riemannian metric 
	$$
	\hc \defeq \sum_{i,j=2}^3h_{ij}(c,\tilde{x}^2,\tilde{x}^3)d\tilde{x}^i\otimes d\tilde{x}^j.
	$$
	(Since by Lemma \ref{lemma:pp-waves} each $g_{ij}(N)$ is independent of $\tilde{x}^0$, the components $h_{ij}$ are $\tilde{x}^0$-independent.) By the Gauss Equation (\cite[Theorem~8.5,~p.~230]{Lee}), we know that the (single) component $\text{Rm}_{\scalebox{.5}{\emph{c}}}(\partial_2,\partial_3,\partial_2,\partial_3)$ of the curvature tensor of $\hc$ is related to that of  $g$ by
	\beqa
	\overbrace{\,\text{Rm}_N(\partial_2,\partial_3,\partial_2,\partial_3)\,}^{\text{0, by \eqref{eqn:ppwave}}} &=& \text{Rm}_{\scalebox{.5}{\emph{b,c}}}(\partial_2,\partial_3,\partial_2,\partial_3)\label{eqn:Riem0}\\
	&&\hspace{-.5in} -\, g_{\scriptscriptstyle N}(\sff_{\scalebox{.5}{\emph{b,c}}}(\partial_2,\partial_3),\sff_{\scalebox{.5}{\emph{b,c}}}(\partial_3,\partial_2)) + g_{\scriptscriptstyle N}(\sff_{\scalebox{.5}{\emph{b,c}}}(\partial_2,\partial_2),\sff_{\scalebox{.5}{\emph{b,c}}}(\partial_3,\partial_3)),\nonumber
	\eeqa
	where $\sff_{\scalebox{.5}{\emph{b,c}}}$ is the second fundamental form of $\hc$; observe that as $N$ is parallel, the curvature tensor of $\gN$ coincides with $R_{\scriptscriptstyle N}$.  But when $N=\partial_0$ is parallel, this equation simplifies considerably, because in such a case the vector field $\nabla^{\scalebox{0.5}{\emph{N}}}_{\partial_2}{\partial_3}$ has no $\partial_1$-component,
	\beqa
	\label{eqn:Pi1}
	\nabla^{\scalebox{0.5}{\emph{N}}}_{\partial_2}{\partial_3} = \alpha\partial_0+\beta\partial_2+\gamma\partial_3,
	\eeqa
	for some smooth functions $\alpha,\beta,\gamma$ (here use that 
	\[\gN(\nabla^{\scalebox{0.5}{\emph{N}}}_{\partial_2}{\partial_3} ,\partial_0)=-\gN(\nabla^{\scalebox{0.5}{\emph{N}}}_{\partial_2}{\partial_0} ,\partial_3)=0\] because $\partial_0=N$ is parallel and the compatibility of $\nabla$ and $g$). Thus, since $\partial_0$ is orthogonal to $\Lambda_{\scalebox{.5}{\emph{b,c}}}$,
	\beqa
	\label{eqn:Pi2}
	\sff_{\scalebox{.5}{\emph{b,c}}}(\partial_2,\partial_3) \defeq (\nabla^{\scalebox{0.5}{\emph{N}}}_{\partial_2}{\partial_3})^{\perp} = \alpha\partial_0,
	\eeqa
	where, as $N$ is parallel, we observe that $\nabla^{\scalebox{0.5}{\emph{N}}}$ is the Levi-Civita connection of $\gN$;
	likewise for $\sff_{\scalebox{.5}{\emph{b,c}}}(\partial_2,\partial_2)$ and $\sff_{\scalebox{.5}{\emph{b,c}}}(\partial_3,\partial_3)$.\footnote{Since the subspace $S \defeq \text{span}\{\partial_2,\partial_3\}$ is spacelike, its orthogonal complement $S^{\perp}$ is timelike, and each tangent space $T_p\uu$ is the direct sum $S_p \oplus S_p^{\perp}$ (see, e.g., \cite[p.~141]{o1983}).  Therefore, $\alpha \partial_0 \in S^{\perp}$ is the (unique) normal component of $\cd{\partial_2}{\partial_3} = \alpha\partial_0+\beta\partial_2+\gamma\partial_3$.} Thus, as $\partial_0$ is lightlike, \eqref{eqn:Riem0} simplifies to
	\beqa
	\label{eqn:Pi}
	\text{Rm}_{\scalebox{.5}{\emph{b,c}}} = 0.
	\eeqa
In other words, each Riemannian submanifold $(\Lambda_{\scalebox{.5}{\emph{b,c}}},\hc)$ is flat, which means that there exist local coordinates $r_{\scalebox{.5}{\emph{c}}} \defeq r_{\scalebox{.5}{\emph{c}}}(\tilde{x}^2,\tilde{x}^3),s_{\scalebox{.5}{\emph{c}}} \defeq s_{\scalebox{.5}{\emph{c}}}(\tilde{x}^2,\tilde{x}^3)$ on each $\Lambda_{\scalebox{.5}{\emph{b,c}}}$ with respect to which the induced metric $h_{\scalebox{.5}{\emph{b,c}}}$ takes the form
$$
h_{\scalebox{.5}{\emph{b,c}}} = dr_{\scalebox{.5}{\emph{c}}}\otimes  dr_{\scalebox{.5}{\emph{c}}} + ds_{\scalebox{.5}{\emph{c}}}\otimes ds_{\scalebox{.5}{\emph{c}}}.
$$
At each $\tilde{x}^1 = c$, we thus have the triple $(c,r_{\scalebox{.5}{\emph{c}}},s_{\scalebox{.5}{\emph{c}}})$; considering $-s_{\scalebox{.5}{\emph{c}}}$ if necessary, we may also assume that each $\{\partial_{r_{\scalebox{0.5}{\emph{c}}}},\partial_{s_{\scalebox{0.5}{\emph{c}}}}\}$ is positively oriented.  We now put these together to form a  smooth coordinate chart $(\tilde{x}^0,\tilde{x}^1,x,y)$, as follows.  First, at each point $(b,c,0,0) \in \Lambda_{\scalebox{.5}{\emph{b,c}}}$, rotate each $\{\partial_{r_{\scalebox{0.5}{\emph{c}}}},\partial_{s_{\scalebox{0.5}{\emph{c}}}}\}$ so that $\partial_{r_{\scalebox{0.5}{\emph{c}}}}$ points in the direction of $\partial_2$ (i.e., $\partial_{r_{\scalebox{0.5}{\emph{c}}}} = \frac{1}{\sqrt{h_{22}}}\partial_2\big|_{(b,c,0,0)}$); these rotated $\partial_{r_{\scalebox{0.5}{\emph{c}}}}$'s thus comprise a smooth vector field $V \defeq \frac{1}{\sqrt{h_{22}}}\partial_2$ on the submanifold $\Sigma \defeq \{(\tilde{x}^0,\tilde{x}^1,0,0)\}$.  As each $\partial_{s_{\scalebox{0.5}{\emph{c}}}}$ is orthogonal to its corresponding $\partial_{r_{\scalebox{0.5}{\emph{c}}}}$ and since an orientation has been fixed, the $\partial_{s_{\scalebox{0.5}{\emph{c}}}}$'s thus also comprise a smooth vector field $W$ on $\Sigma$, namely, the unique unit length vector field orthogonal to $V$ and such that $\{V,W\}$ is positively oriented.  Next, at each $p \in \Sigma \cap \Lambda_{\scalebox{.5}{\emph{b,c}}}$, parallel transport $V,W$ along the integral curve $\gamma^{(p)}$ of $\partial_{r_{\scalebox{0.5}{\emph{c}}}}$ through $p$, via the connection $\nabla^{\scalebox{0.5}{\emph{b,c}}}$ compatible with the induced metric $h_{\scalebox{.5}{\emph{b,c}}}$; then, at each point along $\gamma^{(p)}$, parallel transport along the integral curve of $\partial_{s_{\scalebox{0.5}{\emph{c}}}}$ on $\Lambda_{\scalebox{.5}{\emph{b,c}}}$; by abuse of notation, let $V,W$ denote the resulting vector fields, now smoothly defined on $\uu$.  By the Gauss Formula (see, e.g., \cite[Theorem~8.2,~p.~228]{Lee}) and the flatness condition \eqref{eqn:Pi}, we have, on each $(\Lambda_{\scalebox{.5}{\emph{b,c}}},\hc)$, that
\[
 \nabla^{\scalebox{0.5}{\emph{N}}}_V V = \cancelto{0}{\nabla^{\scalebox{0.5}{b,c}}_{\!V}\,V} + \sff_{\scalebox{.5}{\emph{b,c}}}(V,V);
 \]
similarly for $\nabla^{\scalebox{0.5}{\emph{N}}}_V W, \nabla^{\scalebox{0.5}{\emph{N}}}_W V$, and $\nabla^{\scalebox{0.5}{\emph{N}}}_W W$.  But just as in \eqref{eqn:Pi2}, each $\sff_{\scalebox{.5}{\emph{b,c}}}(\cdot,\cdot)$ must point solely in the direction of $\partial_0$.  Finally, define the orthonormal pair
$$
X \defeq -g(\partial_1,V)\partial_0 + V \comma Y \defeq -g(\partial_1,W)\partial_0 + W,
$$
which pair is now orthogonal to $\partial_0,\partial_1$.  It follows that all of the Lie brackets of the frame $\{\partial_0,\partial_1,X,Y\}$ will have only a $\partial_0$-component, which collectively yields that $X^{\flat} = g(X,\cdot)$ and $Y^{\flat} = g(Y,\cdot)$ will be closed; indeed, for any pair of vector fields $A,B \in \{\partial_0,\partial_1,X,Y\}$,
$$
dX^{\flat}(A,B) = \cancelto{0}{A(g(X,B))}-\cancelto{0}{B(g(X,A))} - \cancelto{0}{g(X,[A,B])} = 0;
$$
likewise with $dY^{\flat}$. By the Poincar\'e Lemma, both 1-forms are locally exact: $X^{\flat} = dx$ and $Y^{\flat} = dy$, for some smooth functions $x,y$. With respect to the coordinate chart $(\tilde{x}^0,\tilde{x}^1,x,y)$, the ambient Lorentzian metric $g$ thus takes the form
	\beqa
	(g_{ij}(N)) = 
	\left(
	\begin{array}{cccc}
		0 & 1 & 0 & 0\\
		1 & g_{11}(N) & 0 & 0\\
		0 & 0 & 1 & 0\\
		0 & 0 & 0 & 1
	\end{array}
	\right),\nonumber
	\eeqa
	which is precisely \eqref{pp-wave1}. (This argument generalizes to dimensions $> 4$.)
	Conversely, suppose that local coordinates $(v,u,x,y)$ exist in which the metric takes this form, with $N = \partial_v = \nabla u$.  Then the nonvanishing Christoffel symbols of such a metric are
	\beqa
	\nabla^{\scalebox{0.5}{\emph{N}}}_{\partial_x}{\partial_u} \!&=&\! \nabla^{\scalebox{0.5}{\emph{N}}}_{\partial_u}{\partial_x}= \frac{H_x}{2}\partial_v,\nonumber\\
	\nabla^{\scalebox{0.5}{\emph{N}}}_{\partial_y}{\partial_u} \!&=&\! \nabla^{\scalebox{0.5}{\emph{N}}}_{\partial_u}{\partial_y}= \frac{H_y}{2}\partial_v,\nonumber\\
	\nabla^{\scalebox{0.5}{\emph{N}}}_{\partial_u}{\partial_u} \!&=&\! \frac{H_u}{2}\partial_v - \frac{H_x}{2}\partial_x - \frac{H_y}{2}\partial_y,\nonumber
	\eeqa
	from which it follows that $R_{\scriptscriptstyle N}(\partial_x,\partial_y)\partial_x = R_{\scriptscriptstyle N}(\partial_x,\partial_y)\partial_y = 0$; this is precisely the curvature condition \eqref{eqn:ppwave}.  Actually something else vanishes, too:
	$$
	R_{\scriptscriptstyle N}(\partial_x,\partial_y)\partial_u =  \nabla^{\scalebox{0.5}{\emph{N}}}_{\!\partial_x}\nabla^{\scalebox{0.5}{\emph{N}}}_{\!\partial_y}\partial_u - \nabla^{\scalebox{0.5}{\emph{N}}}_{\!\partial_y}\nabla^{\scalebox{0.5}{\emph{N}}}_{\!\partial_x}\partial_u = \frac{H_{yx}}{2}\partial_v - \frac{H_{xy}}{2}\partial_v = 0. 
	$$
	Thus $R_{\scriptscriptstyle N}(X,Y)W = 0$ for all $X,Y \in \Gamma(N^{\perp})$ and $W \in \mathfrak{X}(\uu)$.
\end{proof}
We will give some examples of Finsler pp-waves in Section \ref{ExFinslerppwaves}, but let us observe that the examples of Finsler pp-waves already present in literature can be included in the definition of Theorem \ref{thm:pp-waves} up to some interpretations. In both cases, \cite{fusterpabst,HPF21}, the Lorentz-Finsler metric is not smooth on the lightlike parallel vector field $N$, but as these Finsler spacetimes are of Berwald type, there is an available affine connection which makes $N$ parallel. Moreover, the role of $\gN$-orthogonality to $N$ can be played by the tangent space to the lightcone at $N$. In both cases, it coincides with the tangent space to the lightlike cone of a Lorentzian metric which is already a pp-wave with $N$ as parallel vector field. Therefore, the conditions of Theorem \ref{thm:pp-waves} are satisfied also for the Berwaldian Finsler pp-waves of \cite{fusterpabst,HPF21}.

\section{The quotient bundle of a Finsler pp-wave}
\label{sec:bundle}
\begin{thm}[\cite{caja,leistner}]\label{thm:QB}
	Let $(M,L)$ be a Finsler spacetime and $N = \nf$ a lightlike, parallel gradient vector field defined in an open subset $\uu \subseteq M$, with orthogonal complement $N^{\perp}  \subseteq T\uu$.  Then the vector bundle $N^{\perp}/N$ admits a positive-definite inner product $\bar{g}$,
	$$
	\bar{g}([X],[Y]) \defeq g_{\scalebox{0.5}{N}}(X,Y)\hspace{.2in}\text{for all}\hspace{.2in}[X], [Y] \in \Gamma(N^{\perp}/N),
	$$
	and a corresponding linear connection $\overline{\nabla}\colon \mathfrak{X}(\uu)\times \Gamma(N^{\perp}/N) \lra \Gamma(N^{\perp}/N)$,
	$$
	\conn{W}{Y} \defeq [\nabla^{\scalebox{0.5}{N}}_{W}{Y}] \hspace{.2in}\text{for all}\hspace{.2in} W \in \mathfrak{X}(\uu)\hspace{.2in}\text{and}\hspace{.2in}[Y] \in \Gamma(N^{\perp}/N).
	$$
	This connection is flat if and only if $(\uu,L|_{\uu})$ is a Finsler pp-wave.
\end{thm}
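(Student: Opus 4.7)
The plan is to verify the well-definedness of $\bar g$ and $\overline{\nabla}$, compute the curvature of $\overline{\nabla}$ directly in terms of $R_{\scriptscriptstyle N}$, and then reduce the equivalence with the pp-wave condition of Theorem \ref{thm:pp-waves} to a short symmetry argument. A crucial simplification throughout is that, because $N$ is parallel, the Cartan term in the almost-$g$-compatibility rule vanishes, so $\nabla^{\scalebox{0.5}{\emph{N}}}$ coincides with the Levi-Civita connection of $\gN$ and $R_{\scriptscriptstyle N}$ satisfies the full set of Riemann-tensor symmetries.

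First I would check that $\bar g$ is well-defined and positive definite. If $X'=X + aN$ and $Y'=Y + bN$ with $X, Y \in \Gamma(N^\perp)$, then $\gN(X', Y') = \gN(X, Y)$ since $\gN(N,N)=0$ and $X, Y \perp N$; positive-definiteness follows because any $2$-dimensional complement of $\mathrm{span}(N)$ inside $N^\perp$ is spacelike (compare $\mathrm{span}(\partial_2, \partial_3)$ in the coordinates of Lemma \ref{lem-lightlike}). To see that $\overline{\nabla}$ is well defined I would check first that $\nabla^{\scalebox{0.5}{\emph{N}}}_W Y \in \Gamma(N^\perp)$ whenever $Y \in \Gamma(N^\perp)$: parallelism of $N$ gives
$$\gN(N, \nabla^{\scalebox{0.5}{\emph{N}}}_W Y) = W(\gN(N, Y)) - \gN(\nabla^{\scalebox{0.5}{\emph{N}}}_W N, Y) = 0.$$
Well-definedness modulo $N$ is then immediate from $\nabla^{\scalebox{0.5}{\emph{N}}}_W(aN) = W(a)N$, and the $C^\infty(\uu)$-linearity in $W$ and the Leibniz rule in $[Y]$ descend from $\nabla^{\scalebox{0.5}{\emph{N}}}$ without further work.

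The curvature of $\overline{\nabla}$ then reads
$$\overline{R}(W, V)[Z] = \left[\nabla^{\scalebox{0.5}{\emph{N}}}_W \nabla^{\scalebox{0.5}{\emph{N}}}_V Z - \nabla^{\scalebox{0.5}{\emph{N}}}_V \nabla^{\scalebox{0.5}{\emph{N}}}_W Z - \nabla^{\scalebox{0.5}{\emph{N}}}_{[W, V]} Z\right] = [R_{\scriptscriptstyle N}(W, V) Z],$$
so $\overline{\nabla}$ is flat precisely when $R_{\scriptscriptstyle N}(W, V)Z \in \mathrm{span}(N)$ for every $W, V \in \mathfrak{X}(\uu)$ and every $Z \in \Gamma(N^\perp)$. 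The $(\Leftarrow)$ direction of the equivalence is then a direct coordinate computation: in the Brinkmann coordinates $(v, u, x, y)$ provided by Theorem \ref{thm:pp-waves} the Christoffel symbols listed in the proof of that theorem show that every $R_{\scriptscriptstyle N}(\partial_i, \partial_j)\partial_k$ with $\partial_k \in \mathrm{span}(\partial_v, \partial_x, \partial_y) = N^\perp$ is a multiple of $\partial_v = N$.

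For the converse I would assume flatness and take $X, Y, Z \in \Gamma(N^\perp)$, so that $R_{\scriptscriptstyle N}(X, Y)Z = \lambda N$ for some function $\lambda$. Choosing a transverse field $U$ with $\gN(N, U) = 1$ (for instance $U = \partial_1$ in the coordinates of Lemma \ref{lem-lightlike}) and using the Riemann pair-symmetry,
$$\lambda = \gN(R_{\scriptscriptstyle N}(X, Y) Z, U) = \gN(R_{\scriptscriptstyle N}(Z, U) X, Y);$$
flatness applied once more forces $R_{\scriptscriptstyle N}(Z, U) X \in \mathrm{span}(N)$, which pairs trivially with $Y \in N^\perp$, so $\lambda = 0$ and the pp-wave curvature condition of Theorem \ref{thm:pp-waves} holds. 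The main technical hurdle I expect is ensuring that every identity used (the pair-symmetry above, as well as honest $g$-compatibility for $\nabla^{\scalebox{0.5}{\emph{N}}}$) is in force in the Finsler setting; all of them follow at once from the parallelism of $N$, which reduces the relevant objects to the Lorentzian Levi-Civita framework attached to $\gN$.
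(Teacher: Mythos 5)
Your proposal is correct and follows essentially the same route as the paper: well-definedness of $\bar g$ and $\overline{\nabla}$ from the lightlike and parallel character of $N$, identification of $\overline{\text{R}}(W,V)[Z]$ with $[R_{\scriptscriptstyle N}(W,V)Z]$, and the pair symmetry of the curvature tensor (legitimate here because parallelism of $N$ makes $\nabla^{\scalebox{0.5}{\emph{N}}}$ the Levi-Civita connection of $\gN$) to match the pp-wave condition of Theorem \ref{thm:pp-waves}. The only difference is one of bookkeeping: the paper expresses flatness as $\text{Rm}_{\scalebox{0.5}{\emph{N}}}(U,W,X,Y)=\text{Rm}_{\scalebox{0.5}{\emph{N}}}(X,Y,U,W)=0$ and invokes the equivalence with $R_{\scriptscriptstyle N}(X,Y)W=0$ for arbitrary $W$ already noted at the end of the proof of Theorem \ref{thm:pp-waves}, whereas you reprove that equivalence in place (Brinkmann coordinates for one direction, a second application of pair symmetry for the other).
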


\begin{proof}
	The metric $\bar{g}$ will be well defined, and positive definite, whenever $N$ is lightlike; indeed, every $X \in \Gamma(N^{\perp})$ not proportional to $N$ is necessarily spacelike, so that $\bar{g}$ is nondegenerate (and positive-definite), and if $[X] = [X']$ and $[Y] =[Y']$, so that  $X' = X +fN$ and $Y' = Y+kN$ for some smooth functions $f,k$, then
	$$
	\bar{g}([X'],[Y']) = \gN(X',Y') = \gN(X,Y) =  \bar{g}([X],[Y]).
	$$
	On the other hand, the connection $\overline{\nabla}$ requires $N$ to be parallel or else it is not well defined: $\nabla^{\scalebox{0.5}{\emph{N}}}_{W}{Y}  \in \Gamma(N^{\perp})$ if and only if $N$ is parallel, in which case
	$$
	\conn{W}{Y'} = [\nabla^{\scalebox{0.5}{\emph{N}}}_{W}{Y'}] = [\nabla^{\scalebox{0.5}{\emph{N}}}_{W}{Y}]+\cancelto{0}{[W(k)N]} + \cancelto{0}{[k\nabla^{\scalebox{0.5}{\emph{N}}}_{W}{N}]}\, = \conn{W}{Y}.
	$$
	That $\overline{\nabla}$ is indeed a linear connection follows easily.  Now, if this connection is flat, then by definition its curvature endomorphism
	\beqa
	\overline{\text{R}}\colon \mathfrak{X}(\uu)  \times \mathfrak{X}(\uu) &\times& \Gamma(N^{\perp}/N) \rightarrow \Gamma(N^{\perp}/N),~\text{given by}\nonumber\\
	\overline{\text{R}}(U,W)[X] &\defeq& \overline{\nabla}_{\!V}[\overline{\nabla}_{\!W}[X]] - \overline{\nabla}_{\!W}[\overline{\nabla}_{\!U}[X]] - \conn{[U,W]}{X}\nonumber
	\eeqa
	vanishes, for any section $[X] \in \Gamma(N^{\perp}/N)$ and vector fields $U,W \in \mathfrak{X}(\uu)$. Using the metric $\bar{g}$, this flatness condition is equivalent to
	$$
	\bar{g}(\overline{\text{R}}(U,W)[X],[Y]) = 0\hspace{.1in}\text{for all}\hspace{.1in}U,W \in \mathfrak{X}(\uu)\hspace{.1in},\hspace{.1in}[X],[Y] \in \Gamma(N^{\perp}/N).
	$$
	But if we unpack the definitions of $\overline{\nabla}$ and $\bar{g}$, we see that
	\beqa
	\label{eqn:RtoR}
	\bar{g}(\overline{\text{R}}(U,W)[X],[Y]) = \text{Rm}_{\scalebox{0.5}{\emph{N}}}(U,W,X,Y) = \text{Rm}_{\scalebox{0.5}{\emph{N}}}(X,Y,U,W).
	\eeqa
	It follows that $\overline{\text{R}} = 0$ if and only if $R_{\scriptscriptstyle N}(X,Y)W = 0$ for all $X,Y \in \Gamma(N^{\perp})$ and $W \in \mathfrak{X}(\uu)$; by \eqref{eqn:ppwave}, this is precisely the condition to be a Finsler pp-wave.
\end{proof}

\section{Examples}
\label{ExFinslerppwaves}
\label{sec:Examples}
\subsection{Parallel lightlike vector field}
Let us choose a Finsler metric $F$ on $\RR^2\times M$ with $v,u$ the coordinates of $\RR^2$ and such that $\partial_v$ is a Killing field, namely, $F$ does not depend on $v$. Define a one-form $\omega$ such that
\begin{align*}
\omega(\partial_v)&=F(\partial_v),\\
\omega(\partial_u)&=(1+\gN(\partial_u,\partial_v))/F(\partial_v),\\
\omega(\partial_x)&=\gN(\partial_v,\partial_x)/F(\partial_v),
\end{align*}
where $\partial_x$ is tangent to $M$. Then the Lorentz-Finsler metric defined by
\begin{equation}\label{ExParallel}
L(v)\defeq \omega(v)^2-F(v)^2
\end{equation}
admits $N=\partial_v$ as a lightlike parallel vector field (here we are using the fact that if $A=\{v\in T(\RR^2\times M): L(v)>0\}$ is nonempty, then $L$ determines a Finsler spacetime on $\RR^2\times M$, see \cite[Theorem 4.1]{JS20}). To check this, observe that the fundamental tensor $g^{\scalebox{0.5}{\emph{L}}}$ of $L$ is given by
\[g^{\scalebox{0.5}{\emph{L}}}_v(u,w)=\omega(u)\omega(w)-g_{\scalebox{0.5}{\emph{V}}}(u,w)\]
where $g$ is the fundamental tensor of $F$. By the definition of $\omega$, it follows that the coefficients $g^{\scalebox{0.5}{\emph{L}}}_{ij}(N)$, with $N=\partial_v$, form a matrix as in \eqref{nc}. This implies in particular that $N=\partial_v$ is the $L$-gradient of the function $f\colon\RR^2\times M\rightarrow \RR$ defined as $f(v,u,p)=u$. By Lemma \ref{lemma:pp-waves},  recalling that $F$ has been chosen independent of $v$ and that $\omega$ has been constructed from $F$, we conclude that $N$ is parallel.

\subsection{Finsler pp-waves}
Let us give a particular example of Finsler pp-wave. Choose an arbitrary Finsler metric $F$ on $\RR^2$ (depending on $(u,x,y)$) and define the one-form in $\RR^2$ determined by
\begin{align*}
\omega(\partial_v)&=F(\partial_v),\\
\omega(\partial_u)&=(1+\gN(\partial_u,\partial_v))/F(\partial_v),
\end{align*}
then 
\begin{equation}\label{Finslerppwave}
L=\omega^2-F^2-dx^2-dy^2
\end{equation} is a Finsler pp-wave defined in the region $A\{v\in T(\RR^2\times M): L(v)\geq 0\}$. Observe that the matrix of $\gN$, for $N=\partial_v$, is of the form \eqref{pp-wave1}. Moreover, this metric is smooth on $A$, because it is not smooth in the vectors $(0,w)\in T(\RR^2\times M)$ with $w\in TM$ but in this case, $L(0,w)=-w_1^2-w^2_2<0$ if $w\not=0$. Even if this metric is not of the same type as \eqref{ExParallel}, it is very similar. Indeed, in this case, the role of the metric $F$ in \eqref{ExParallel} is played by $\sqrt{F^2+dx^2+dy^2}$, which is not a regular Finsler metric because of the smoothness issues, but the proof of  \cite[Theorem 4.1]{JS20}) still works to guarantee that \eqref{Finslerppwave} determines a Finsler spacetime. Moreover, as in the last subsection, one can check that $L$ admits $N=\partial_v$ as a parallel and lightlike vector field, and the coordinates of $\gN$ are of the form \eqref{pp-wave1}, which concludes by Theorem \ref{thm:pp-waves}, that $L$ is a Finsler pp-wave.

\section{Penrose's construction}
\label{sec-penrose}
With the generalizations of lightlike coordinates and pp-waves to Finsler spacetimes in Sections \ref{sec-lightlike} and \ref{sec-finslerpp}, respectively, we are now ready to consider the extension of Penrose's ``plane wave limit" \cite{penPW}. Again, consider a 4-dimensional Finsler spacetime $(M,L)$ and $N$ a lightlike gradient vector field such that locally in lightlike coordinates $(x^0,\dots,x^3)$, $g_{\scriptscriptstyle N}$ takes the form
\beqa
\label{eqn:lor*}
(g_{ij}(N)) \defeq
    \begin{pmatrix}
        0 & 1  & 0 & 0 \\
        1 & g_{11}(N) & g_{12}(N) & g_{13}(N) \\
        0 & g_{21}(N) &  h_{22} & h_{23} \\
        0 & g_{31}(N) &  h_{32} & h_{33}
      \end{pmatrix},\nonumber
\eeqa
being $N=\frac{\partial }{\partial x^0}$. Recall that lightlike gradient vector fields always exist at least locally (see Lemma \ref{gradientexist}).
Penrose's construction now proceeds as follows.  Define another coordinate system $(\tilde{x}^0,\ldots,\tilde{x}^3)$ via the diffeomorphism $\varphi$ given by
\beqa
\label{eqn:tilde}
(x^0,\dots,x^3) \overset{\varphi}{\mapsto} (x^0,\Omega^{-2}x^1,\Omega^{-1} x^2,\Omega^{-1} x^3) \defeq (\tilde{x}^0,\dots,\tilde{x}^3),\nonumber\\
\eeqa
where $\Omega > 0$ is a constant.  Next, define the following metric $\gpw$ in the new coordinates $(\tilde{x}^0,\dots,\tilde{x}^3)$,
\beqa
\label{newmetric}
\big((\gpw)_{ij}\big) \defeq 
    \underbrace{\,\begin{pmatrix}
        0 & 1  & 0 & 0 \\
        1 & \Omega^2g_{11}(N) & \Omega g_{12}(N) & \Omega g_{13}(N) \\
        0 & \Omega g_{21}(N) &  h_{22} & h_{23}
\\
        0 & \Omega g_{31}(N) &  h_{32} & h_{33}
      \end{pmatrix}\,}_{\text{defined in the coordinates}~(\tilde{x}^0,\tilde{x}^1,\tilde{x}^2,\dots,\tilde{x}^n)},\nonumber
\eeqa
where each component $(\gpw)_{ij}$ is (strategically) defined as follows,
\beqa
(\gpw)_{11}(\tilde{x}^0,\dots,\tilde{x}^3) &\defeq& \Omega^2 \underbrace{\,{g}_{11}(N)(\tilde{x}^0,\Omega^2\tilde{x}^1,\Omega\,\tilde{x}^2,\Omega\,\tilde{x}^3)\,}_{=\,{g}_{11}(N)(x^0,\dots,x^3)}\label{newcomp},\\
(\gpw)_{22}(\tilde{x}^0,\dots,\tilde{x}^3) &\defeq&\underbrace{\,h_{22}(\tilde{x}^0,\Omega^2\tilde{x}^1,\Omega\,\tilde{x}^2,\Omega\,\tilde{x}^3)\,}_{=\,{h}_{22}(x^0,\dots,x^3)}\label{newcomp2},
\eeqa
and similarly with the others.  Note that as $\Omega \to 0$,
\beqa
\lim_{\Omega \to 0}  (\gpw)_{11} &\overset{\eqref{newcomp}}{=}& 0\cdot {g}_{11}(\tilde{x}^0,0,\dots,0) = 0,\nonumber\\
\lim_{\Omega \to 0}  (\gpw)_{22} &\overset{\eqref{newcomp2}}{=}& {h}_{22}(\tilde{x}^0,0,\dots,0);\label{newcomp4}\nonumber
\eeqa
similarly with the others.  Note also that the metric $\gpw$ is \emph{conformal} to $(\varphi^{-1})^*\gN$; to see this, use the fact that 
$$
((\varphi^{-1})^*\gN)(\partial_{\tilde{x}^i},\partial_{\tilde{x}^j})d\tilde{x}^i \otimes d\tilde{x}^j = \gN(\partial_{x^i},\partial_{x^j})dx^i\otimes dx^j,
$$
as well as \eqref{eqn:tilde}, to obtain
\beqa
dx^0 \otimes dx^1 &=& \Omega^{2}\,d\tilde{x}^0 \otimes d\tilde{x}^1, \nonumber\\
g_{11}(N)(x^0,x^1,x^2,\dots,x^n)\,dx^1 \otimes dx^1 \!\!&\overset{\eqref{newcomp}}{=}&\!\! \Omega^{2}\,(\gpw)_{11}(\tilde{x}^0,\tilde{x}^1,\tilde{x}^2,\dots,\tilde{x}^n)\,d\tilde{x}^1 \otimes d\tilde{x}^1,\nonumber\\
g_{12}(N)(x^0,x^1,x^2,\dots,x^n)\,dx^1 \otimes dx^2 &=& \Omega^{2}\,(\gpw)_{12}(\tilde{x}^0,\tilde{x}^1,\tilde{x}^2,\dots,\tilde{x}^n)\,d\tilde{x}^1 \otimes d\tilde{x}^2,\nonumber\\
h_{22}(N)(x^0,x^1,x^2,\dots,x^n)\,dx^2 \otimes dx^2 \!\!&\overset{\eqref{newcomp2}}{=}&\!\!\Omega^{2}\,(\gpw)_{22}(\tilde{x}^0,\tilde{x}^1,\tilde{x}^2,\dots,\tilde{x}^n)\,d\tilde{x}^2 \otimes d\tilde{x}^2,\nonumber
\eeqa
and so on, which clearly yields the relationship
\beqa
\label{eqn:cm}
(\varphi^{-1})^*\gN = \Omega^{2}\,\gpw.
\eeqa
In particular, setting $\tilde{g} \defeq (\varphi^{-1})^*\gN$, observe that the homothety \eqref{eqn:cm} means that the Levi-Civita connections of $\gpw$ and $\tilde{g}$ are equal: $\nabla^{\scriptscriptstyle \Omega} = \nabla^{\scriptscriptstyle \tilde{g}}$. Taking the limit
$$
\lim_{\Omega \to 0} \gpw = \lim_{\Omega \to 0} \frac{(\varphi^{-1})^*\gN}{\Omega^{2}}
$$
thus yields the metric
\beqa
\label{plw2}
    \underbrace{\,\begin{pmatrix}
        0 & 1  & 0 & 0 \\
        1 & 0 & 0 & 0 \\
        0 & 0 &  h_{22}(\tilde{x}^0,0,\dots,0) & h_{23}(\tilde{x}^0,0,\dots,0) \\
        0 & 0 &  h_{32}(\tilde{x}^0,0,\dots,0) & h_{33}(\tilde{x}^0,0,\dots,0)
      \end{pmatrix}\,}_{\text{defined in the coordinates}~(\tilde{x}^0,\dots,\tilde{x}^3)},\nonumber
\eeqa
which is clearly a metric in so-called Rosen coordinates of Lorentzian geometry, also referred to as Baldwin-Jeffery-Rosen coordinates (e.g. \cite{duvalgibbons}) due to the pioneering paper \cite{baldwinjeffery} by Baldwin and Jeffery. So, finally, we need to confirm that this can indeed be interpreted as a Finsler pp-wave in Brinkmann coordinates as discussed before. Following the standard coordinate transformation as described e.g. by Blau and O'Loughlin \cite{blauoloughlin}, we introduce a matrix $M$ such that
\[
h_{ij}(\tilde{x}_0)M^i_k(\tilde{x}^0) M^j_l(\tilde{x}^0)=\delta_{kl}\,,
\]
satisfying the symmetry property
\[
h_{ij} \dot{M}^i_k M^j_l
=
h_{ij}M^i_k \dot{M} ^j_l\,,
\]
with $2\leq i\,,j\,,k\leq 3$ labelling the transverse coordinates, and the dot denoting differentiation with respect to $\tilde{x_0}$. Since $h_{ij}$ depends smoothly on $\tilde{x}_0$, such a matrix $M$ can always be found for sufficiently small $\tilde{x}_0$ (e.g. \cite{blau}). Then the transformation
\begin{align*}
\tilde{x}^0&=u\,, \\
\tilde{x}^1&=v-\frac{1}{2}h_{ij}\dot{M}^i_k M^j_l x^k x^l\,,\\
\tilde{x}^i&=M^i_jx^j\,,
\end{align*}
gives rise to the metric in Brinkmann coordinates $(v\,,u\,,x^i)$ with
\[
H=-(h_{ij} \dot{M}^i_k\dot{)} M^j_l x^k x^l
\]
as the $uu$-component of $\gN$ in (\ref{eq-brinkmann}).
\section{Concluding remarks}
The first main result of this article is the construction of a lightlike gradient vector field $N$ in Finsler spacetimes (see Lemma \ref{gradientexist}) yielding a particular chart for $\gN$, as given in Lemma \ref{lem-lightlike}. Secondly, after establishing a condition for $N$ to be parallel, we extend the notion of pp-waves, as given by \cite{globke}, to Finsler spacetimes in Theorem \ref{thm:pp-waves}. New examples of such Finsler pp-waves are found in Section \ref{sec:Examples}, and we also show their quotient bundle structure in Section \ref{sec:bundle}. Finally, Penrose's plane wave limit is adapted to Finsler pp-waves in Section \ref{sec-penrose}.
\newline
\indent In closing, let us note that Lorentzian spacetimes with a parallel lightlike vector field are also known as \emph{Bargmann manifolds}. These have proven useful for studying kinematical groups, in particular the Carroll group of plane waves (notably in \cite{duvalgibbons}), thus raising the question of the kinematical group structure associated with Finsler pp-waves. The optical properties of Finsler pp-waves may offer another worthwhile avenue for future work, as Lorentzian plane waves are well-known to exhibit some remarkable lensing effects (e.g., \cite{harte}). Indeed, it may be interesting to see whether Finsler pp-waves can also be regarded as members of a Kundt class generalized to Finsler spacetimes.

\section*{Acknowledgements}
MAJ was supported by the project  PGC2018-097046-B-I00 funded by MCIN/ AEI /10.13039/501100011033/ FEDER ``Una manera de hacer Europa.''

\bibliographystyle{alpha}
\bibliography{finslerpp}
\end{document}